\documentclass[12pt]{article}
\usepackage[T1]{fontenc}
\usepackage[mathscr]{euscript}
\usepackage{amsmath}
\usepackage{amsthm}
\usepackage{stackrel, amssymb}
\usepackage{physics}
\usepackage{tensor}
\usepackage{mathtools}
\usepackage{slashed}
\usepackage{quiver}
\usepackage{hyperref}
\hypersetup{ 
	colorlinks=true,
	linkcolor=blue,
	urlcolor=blue,
	citecolor=blue
}
\usepackage{geometry}
\usepackage{sectsty}
\subsectionfont{\normalfont\itshape}
\subsubsectionfont{\normalfont\itshape}
\usepackage[toc, page]{appendix}
\usepackage{authblk}
\usepackage{xcolor}
\usepackage[style=alphabetic, maxbibnames=99, maxalphanames=99]{biblatex}
\numberwithin{equation}{section}

\newtheorem{prop}{Proposition}[section]

\newtheorem{theorem}[prop]{Theorem}

\newtheorem{lemma}[prop]{Lemma}

\newtheorem{cor}[prop]{Corollary}

\title{Quasimaps to Nakajima varieties as critical loci}
\author{Spencer Tamagni}
\affil{\textit{Leinweber Institute for Theoretical Physics, University of California, Berkeley}}
\date{\today}
\begin{document}

\maketitle

\begin{abstract}
$X$ is a Nakajima quiver variety. In this note we report the observation that the space parameterizing quasimaps from $\mathbb{P}^1$ to $X$, sending infinity to a given point, may be presented globally as the critical locus of an explicit function.
\end{abstract}

\setcounter{tocdepth}{2}
\tableofcontents 

\section{Introduction}
In this short note we will give an ADHM-style \cite{adhm} construction of the moduli space of quasimaps from $\mathbb{P}^1$ to a Nakajima quiver variety $X$. Our construction is reasonably canonical and presents this space as the critical locus of an explicit function. A consequence of this is that the space of quasimaps has a natural derived enhancement with a $(-1)$-shifted symplectic structure in the sense of \cite{ptvv}, \textit{which can be described explicitly}. More details on the derived structure of quasimap spaces will appear in a companion paper \cite{BT}. 

In the instructive special case where $X = \text{Hilb}_n(\mathbb{C}^2)$, the construction may be described as follows. Consider the following quiver \eqref{bigadhmquiver}.

\begin{equation} \label{bigadhmquiver}
\begin{tikzcd}[row sep = huge]
    \underline{W} \arrow[r, "\iota", shift left=1.5] & \arrow[l, "j"]  \underline{V} \arrow[out=0, in=45, loop, swap, "\xi_2"] 
    \arrow[out=90, in=135, loop, swap, "\xi_3"] 
    \arrow[d, swap, "I"] \\
    & U
    \arrow[out=157.5, in=202.5, loop, swap, "B_1"]
    \arrow[out=247.5, in=292.5, loop, swap, "B_2"]
    \arrow[out=337.5, in=22.5, loop, swap, "B_3"] 
    \arrow[u, bend right, swap, "J_2"]
    \arrow[u, bend right, shift right=3ex, swap, "J_3"]
    \arrow[ul, "\alpha"]
\end{tikzcd}
\end{equation}

We view the underlined vector spaces $W$, $V$ (of respective dimensions $1$ and $n$) as fixed framing spaces and likewise view the ADHM data $(\xi_2, \xi_3, \iota, \jmath)$ as describing a given point $p$ in $\text{Hilb}_n(\mathbb{C}^2)$. See Section \ref{nak} below for detailed notation and conventions on Nakajima quiver varieties. Write the potential function, which depends on the point $p$ (in particular, we view $\xi_2, \xi_3, \iota, \jmath$ as fixed and do not differentiate with respect to them when taking the critical locus below):
\begin{equation} \label{potential}
    \mathscr{W}_p = \tr(B_1 \comm{B_2}{B_3}) + \tr(J_2(B_2 I - I \xi_2)) + \tr(J_3 (B_3 I - I \xi_3)) + \tr(I \iota \alpha). 
\end{equation}
Notice this function is invariant under the action of the group $GL(U) \ltimes \text{Hom}(U, V)$ defined by \eqref{eq:groupact} below. Write $\overline{\mathscr{W}}_p$ for the induced function on the quotient by $GL(U) \ltimes \text{Hom}(U, V)$ of the locus of $\textit{stable}$ representations satisfying $\mathbb{C}[B_1]I(V) = U$. Write for the critical locus
\begin{equation}
    M(p, d) := \text{Crit}(\overline{\mathscr{W}}_p).
\end{equation}
Let $\mathsf{QM}^p_d$ denote the moduli space of stable quasimaps from $\mathbb{P}^1$ to $\text{Hilb}_n(\mathbb{C}^2)$ of degree $d = \dim(U)$ and such that $f(\infty) = p$, see Section \ref{quasidef} below or \cite{qmtogit}. Then we have 
\begin{theorem} \label{mainiso}
There is an isomorphism of algebraic stacks \begin{tikzcd} \mathsf{QM}^p_d \arrow[r, "\sim"] & M(p, d). \end{tikzcd}
\end{theorem}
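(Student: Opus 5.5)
The plan is to construct both sides as moduli of the same linear-algebraic data and match them functorially over an arbitrary base $S$. This gives an isomorphism of stacks rather than only a bijection on points.

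\textbf{Quasimaps as lattice data.} A stable quasimap $f$ from $\mathbb{P}^1$ to $\text{Hilb}_n(\mathbb{C}^2)$ with $f(\infty)=p$ consists of three pieces:
\begin{itemize}
\item a rank $n$ bundle $\mathcal{V}$ of degree $d$ on $\mathbb{P}^1$, trivialized at $\infty$ as $V$;
\item sections $b_2, b_3$ of $\text{End}(\mathcal{V})$ and sections $i, j$ of $\text{Hom}(W\otimes\mathcal{O},\mathcal{V})$ and $\text{Hom}(\mathcal{V},W\otimes\mathcal{O})$ satisfying $[b_2,b_3]+ij=0$;
\item agreement with $(\xi_2,\xi_3,\iota,\jmath)$ at $\infty$.
\end{itemize}
Restricting to $\mathbb{A}^1=\mathbb{P}^1\setminus\infty$, the trivialization at $\infty$ identifies the $\mathbb{C}[z]$-module of sections $M$ with a lattice commensurable with $V[z]$. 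After fixing sign conventions, stability forces degree $\ge 0$, so $M\subset V[z]$. Set $U:=V[z]/M$, a $d$-dimensional space on which $z$ acts by $B_1$. The image of $V\subset V[z]$ gives $I\colon V\to U$, and the cyclicity condition $\mathbb{C}[B_1]I(V)=U$ says exactly that $V[z]\to U$ is surjective. Thus the pair $(\mathcal{V},\text{framing})$ is equivalent to $(U,B_1,I)$ with $\mathbb{C}[B_1]I(V)=U$, up to $GL(U)$. This is the standard Beauville--Laszlo/handsaw-type description, and it works in families over $S$, with $U$ a rank $d$ bundle on $S$.

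\textbf{Matching the critical equations.} The critical equations of $\mathscr{W}_p$ are:
\begin{itemize}
\item $\partial_{J_a}$: $B_aI=I\xi_a$;
\item $\partial_{B_1}$: $[B_2,B_3]=0$;
\item $\partial_{B_2},\partial_{B_3}$: $[B_3,B_1]+IJ_2=0$ and $[B_1,B_2]+IJ_3=0$;
\item $\partial_\alpha$: $I\iota=0$;
\item $\partial_I$: $\xi_2J_2-J_2B_2+\xi_3J_3-J_3B_3=\iota\alpha$, up to signs.
\end{itemize}
The dictionary I would use is the resolvent formula
\[ b_a(z)=\xi_a+J_a(z-B_1)^{-1}I, \]
together with analogous formulas for $i(z)$ and $j(z)$, where $j$ involves $\alpha$. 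These are rational in $z$ and regular at $\infty$ with the prescribed value. The equations $B_aI=I\xi_a$ together with $[B_1,B_a]=\mp IJ_{a'}$ say precisely that $\xi_a$ and the polar part preserve the lattice $M=\ker(V[z]\to U)$, so that $b_a$ is a genuine section of $\text{End}(\mathcal{V})$. Similarly, $I\iota=0$ and the $\alpha$-term make $i$ and $j$ regular. Expanding $[b_2(z),b_3(z)]+i(z)j(z)$ with the resolvent identity, its constant term is $[\xi_2,\xi_3]+\iota\jmath=0$. Its polar part vanishes exactly when $[B_2,B_3]=0$ and the $\partial_I$-equation hold. I would then check the converse: every quasimap's sections $b_a$, $i$, $j$ have this form for unique $(B_2,B_3,J_2,J_3,\alpha)$ modulo the $\text{Hom}(U,V)$-action. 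The $\text{Hom}(U,V)$-action is exactly the ambiguity in lifting sections of $\text{End}(\mathcal{V})$ from $M$ to $V[z]$.

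\textbf{Stability and the main obstacle.} I would then verify that quasimap stability (generic value in the stable locus, i.e.\ the quasimap has only finitely many base points) matches the stability imposed on representations. This should follow because stability of $p$ and regularity at $\infty$ already force generic stability. The cyclicity condition is what makes the $GL(U)\ltimes\text{Hom}(U,V)$-action free, so the quotient is an honest stack on which $\overline{\mathscr{W}}_p$ is defined.

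The main obstacle is the second step done scheme-theoretically. One must show that the critical locus of the descended function $\overline{\mathscr{W}}_p$ on the quotient, rather than the naive critical locus upstairs, cuts out exactly the quasimap equations, with no extra components or nonreduced structure. I would handle this by working on a slice: use the $\text{Hom}(U,V)$-freedom to normalize $J_a$, and check that the moment-map-type equations for $\text{Hom}(U,V)$, namely $\partial_I$ composed appropriately, are automatically implied. This reduces the comparison to an identity of ideals that can be read off from the resolvent expansion.
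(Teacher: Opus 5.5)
\textbf{Where the argument breaks.} Your lattice picture and resolvent formulas ($b_2(z)=\xi_2+J_3(z-B_1)^{-1}I$, $j(z)=\jmath+\alpha(z-B_1)^{-1}I$, $i=\iota$) are what the paper's construction becomes once a splitting $F\cong U\oplus V$ is chosen. Your check that the critical equations make these regular and give the moment map is essentially the paper's. The gap is the first step.

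A framing of $\mathscr{V}$ at the single point $\infty$ does \emph{not} identify the sections over $\mathbb{A}^1$ with a lattice in $V(z)$. For that you need a morphism $\mathscr{V}\to V\otimes\mathscr{O}_{\mathbb{P}^1}$ restricting to the identity at $\infty$. Its existence is where stability of $p$ is really used: Lemma~\ref{lemma:canonicalsplit} builds it from $\mathscr{J}$ and path monomials in $\mathscr{B}_2,\mathscr{B}_3$, and this is also what gives $M\subset V[z]$ and $d\geq 0$. It is also not unique: such morphisms form a torsor under $\text{Hom}(\mathscr{V},V\otimes\mathscr{O}(-1))=H^0(\mathscr{V}^\vee(-1))\otimes V\cong \text{Hom}(U,V)$, by Serre duality.

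Changing the embedding changes the lattice. In \eqref{eq:groupact} with $g=1$, $x$ acts by $B_1\mapsto B_1+Ix$, $\alpha\mapsto \alpha+\jmath x$, $J_3\mapsto J_3+\xi_2x-xB_2$, and so on. Two of your claims therefore fail:
\begin{itemize}
\item ``$(\mathscr{V},\text{framing})\leftrightarrow (U,B_1,I)/GL(U)$'' is false. The right side is the Quot scheme of length-$d$ quotients of $V\otimes\mathscr{O}_{\mathbb{A}^1}$, of dimension $nd$.
\item ``$\text{Hom}(U,V)$ is the ambiguity in lifting sections'' is also false. For fixed cyclic $(B_1,I)$, the remaining data $B_a,J_a,\alpha$ are uniquely determined by the quasimap (e.g.\ $J_a,\alpha$ by the Laurent coefficients $J_aB_1^kI$, $\alpha B_1^kI$ at $\infty$). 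Moreover, $\text{Hom}(U,V)$ does not even fix $(B_1,I)$.
\end{itemize}
Your two steps are thus inconsistent. Taken literally they produce $[(\text{Crit}(\mathscr{W}_p)\cap\mathscr{R}^{\text{st}})/GL(U)]$, which is an affine $\text{Hom}(U,V)$-bundle over $M(p,d)$ (Proposition~\ref{prop:gaugefixunipotent}). The further $\text{Hom}(U,V)$-quotient you then take is correct, but for a reason you have not supplied.

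Already for $n=d=1$ this shows up. There $\mathsf{QM}^p_1\cong\mathbb{A}^1$, given by the zero of $j\in H^0(\mathscr{O}(1))$ with $j(\infty)=\jmath$. Your recipe instead gives $\mathbb{A}^2$ (the point $B_1$ and the scalar $\alpha$), while the quasimap sees only $B_1-\alpha I/\jmath$.

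\textbf{How to repair it, and what the paper does.} You would need two things:
\begin{itemize}
\item the embedding of Lemma~\ref{lemma:canonicalsplit} constructed in families;
\item an identification of the torsor of embeddings with the action \eqref{eq:groupact}, so that $[(\text{Crit}\cap\mathscr{R}^{\text{st}})/GL(U)]\to\mathsf{QM}^p_d$ is a $\text{Hom}(U,V)$-torsor.
\end{itemize}
The paper avoids these choices entirely via the Beilinson resolution. The bundles $F_i=R^1p_*\mathscr{V}_i(-2)$ and $U_i=R^1p_*\mathscr{V}_i(-1)$ are canonical. The extension $0\to V_i\to F_i\to U_i\to 0$, obtained as the fiber at $\infty$ of $0\to\mathscr{V}_i\to F_i\boxtimes\mathscr{O}\to U_i\boxtimes\mathscr{O}(1)\to 0$, is exactly what the non-reductive group encodes. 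All quasimap morphisms lift uniquely to the $F$'s (Lemma~\ref{mainlemma}). The inverse is $\mathscr{V}_i=\ker(B_{1,i}-z,\ I_i)$, which is a bundle by Lemma~\ref{lemma:stability}.

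\textbf{Two smaller points.}
\begin{itemize}
\item Freeness of the unipotent part does not come from cyclicity: on $(B_1,I)$ alone, every $(1,x)$ with $Ix=0$ is a stabilizer. It comes from stability of $p$, via $\jmath x=0$ and $\xi_ax=xB_a$.
\item The step you call the main obstacle is not one. $\mathscr{W}_p$ is invariant and the quotient map from the stable locus is smooth, so $\text{Crit}(\overline{\mathscr{W}}_p)$ pulls back scheme-theoretically to $\text{Crit}(\mathscr{W}_p)\cap\mathscr{R}^{\text{st}}$, with nothing further to check.
\end{itemize}
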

It is no more difficult to establish this result for Nakajima quiver varieties in general, which is formulated as Theorem \ref{thm:quasimapcrit} below. Note in the special case $X = \text{Hilb}_n(\mathbb{C}^2)$, we obtain an explicit critical locus description of a certain Pandharipande-Thomas moduli space of stable pairs \cite{Pandharipande_2009}, see \cite{okounkovpcmi} Exercise 4.3.22. 

The only somewhat nonstandard feature of our presentation (at least, in the context of quiver representations) is the quotient by the non-reductive group $GL(U) \ltimes \text{Hom}(U, V)$. It is explained in Section \ref{sect:splittings} below that there is an equivalent description of the quiver moduli space involving a quotient by $GL(U)$ only, though this presentation depends non-canonically on the choice of point $p$ and some auxiliary additional data. In other words, it is possible to gauge fix the non-reductive part of the quotient, though as usual the gauge fixing condition involves some additional choices. 

Theorem \ref{thm:quasimapcrit} is proven in the main body by providing an explicit construction of the maps in both directions and checking they are inverse to each other. The proof is very similar to the analogous construction identifying handsaw quiver varieties with spaces of quasimaps to flag manifolds \cite{nakajima2011hansaw}, though some interesting new features arise (namely, the need to explicitly make use of a potential function). The implications of this construction for geometric representation theory will be explored in forthcoming joint work with T. Botta \cite{BT}. 

\subsubsection{Acknowledgements}
I am grateful to Mina Aganagic, Andrei Okounkov, Vivek Shende, and Peng Zhou for useful discussions and encouragement. I am especially grateful to Tommaso Botta for collaboration on a related project and for initially posing the question which this note answers in the affirmative. 

\section{Preliminaries}
All algebraic geometry in this paper is over $\mathbb{C}$. 
\subsection{Nakajima quiver varieties} \label{nak}
Let $Q$ be a quiver, $Q_0$ its vertex set and $Q_1$ its edge set, each of which we assume finite. Fix an orientation and denote the associated source and target maps on edges as $s, t: Q_1 \to Q_0$. Fix dimension vectors $\vb{v}, \vb{w}: Q_0 \to \mathbb{Z}_{\geq 0}$, with components $v_i, w_i$, $i \in Q_0$. $\vb{w}$ is called the framing vector. A framed quiver representation is an assignment of vector spaces $V_i$ of dimension $v_i$ to each $i \in Q_0$ and maps between them to each edge $e \in Q_1$, as well as maps $W_i \to V_i$ for each $i \in Q_0$, where $\dim W_i = w_i$. The vector space of representations at fixed dimension vectors is denoted 
\begin{equation}
    \text{Rep}(\vb{v}, \vb{w}) := \bigoplus_{e \in Q_1} \text{Hom}(V_{s(e)}, V_{t(e)}) \oplus \bigoplus_{i \in Q_0} \text{Hom}(W_i, V_i). 
\end{equation}
The group $G_{\vb{v}} =\prod_{i \in Q_0} GL(V_i)$ acts naturally on $\text{Rep}(\vb{v}, \vb{w})$; the induced action on the cotangent bundle $T^* \text{Rep}(\vb{v}, \vb{w})$ has moment map $\mu: T^* \text{Rep}(\vb{v}, \vb{w}) \to \mathfrak{g}^*_{\vb{v}}$. If we denote a point in $T^* \text{Rep}(\vb{v}, \vb{w})$ as $(\xi_{2, e}, \xi_{3, e}, \iota_i, \jmath_i)_{e \in Q_1, i \in Q_0}$, the moment map may be written compactly as 
\begin{equation}
    \mu = \comm{\xi_2}{\xi_3} + \iota \jmath. 
\end{equation}
The Nakajima variety associated to $Q, \vb{v}, \vb{w}$ is defined by 
\begin{equation}
    X(\vb{v}, \vb{w}) := \mu^{-1}(0)^{st}/G_{\vb{v}}. 
\end{equation}
$\mu^{-1}(0)^{st} \subset \mu^{-1}(0)$ is the open subset of quiver data satisfying the \textit{stability condition}: any collection $S$ of subspaces $(S_i \subseteq V_i)_{i \in Q_0}$ which are $\xi_2, \xi_3$ invariant and such that $S_i \subseteq \ker \jmath_i$ must have $S_i = 0$, $i \in Q_0$. This is also the GIT or $\theta$-stability condition for $\theta_i = - 1$, $i \in Q_0$. $G_{\vb{v}}$ acts freely on the stable locus and the quotient is a smooth quasiprojective variety \cite{nakajimaALE}, \cite{nakajimaKM}.

It will be convenient for the analysis below to fix a point $p \in X(\vb{v}, \vb{w})$; denote the quiver data assigned to $p$ as $(\xi_{2, e}, \xi_{3, e}, \iota_i, \jmath_i)_{e \in Q_1, i \in Q_0}$. From now on we will drop $\vb{v}, \vb{w}$ from the notation and denote the Nakajima variety by $X$.

\subsection{Quasimaps to Nakajima varieties} \label{quasidef}
The next preliminary necessary for this paper is the moduli space of quasimaps to $X$. See \cite{okounkovpcmi} for an accessible introduction to quasimaps to Nakajima varieties and \cite{qmtogit} for their detailed construction. 

The moduli space of quasimaps from $\mathbb{P}^1$ to $X$ is by definition the open substack 
\begin{equation}
    \mathsf{QM}(X) \subset \text{Map}(\mathbb{P}^1, [\mu^{-1}(0)/G_{\vb{v}}])
\end{equation}
of maps to the quotient stack $[\mu^{-1}(0)/G_{\vb{v}}]$ which send the generic point of $\mathbb{P}^1$ to the stable locus $X \subset [\mu^{-1}(0)/G_{\vb{v}}]$. We have suppressed $\mathbb{P}^1$ from the notation because this note will only consider quasimaps from a fixed source curve $\mathbb{P}^1$. 

We choose a distinguished point $\infty \in \mathbb{P}^1$. There is an evaluation map 
\begin{equation}
    \text{ev}_\infty: \mathsf{QM}(X) \to [\mu^{-1}(0)/G_{\vb{v}}]
\end{equation}
and we will consider the moduli space 
\begin{equation}
    \mathsf{QM}^p(X) := \text{ev}^{-1}_\infty(p)
\end{equation}
for our chosen point $p \in X \subset [\mu^{-1}(0)/G_{\vb{v}}]$. 

\subsubsection{Quasimap data} \label{sect:qmapdata}
Very explicitly, the actual data parameterized by $\mathsf{QM}^p(X)$ is the following. $B$ is an arbitrary base scheme; a family of quasimaps over $B$ consists of the following data. For each $i \in Q_0$ we have a vector bundle $\mathscr{V}_i$ on $B \times \mathbb{P}^1$ of rank $v_i$, together with morphisms 
\begin{equation}
\begin{split}
    \mathscr{B}_{2, e}  & \in H^0(B \times \mathbb{P}^1, \mathscr{H}om(\mathscr{V}_{s(e)}, \mathscr{V}_{t(e)})) \\ 
    \mathscr{B}_{3, e} & \in H^0(B \times \mathbb{P}^1, \mathscr{H}om(\mathscr{V}_{t(e)}, \mathscr{V}_{s(e)})) \\ 
    \mathscr{I}_i & \in H^0(B \times \mathbb{P}^1, \mathscr{H}om(\mathscr{W}_i, \mathscr{V}_i)) \\ 
    \mathscr{J}_i & \in H^0(B \times \mathbb{P}^1, \mathscr{H}om(\mathscr{V}_i, \mathscr{W}_i))
\end{split}
\end{equation}
where $\mathscr{W}_i := W_i \otimes \mathscr{O}_{B \times \mathbb{P}^1}$ denote fixed trivial bundles associated to the framing spaces $W_i$. These sections must satisfy the moment map conditions
\begin{equation}
    \comm{\mathscr{B}_2}{\mathscr{B}_3} + \mathscr{I} \mathscr{J} = 0 \in \bigoplus_{i \in Q_0} H^0(B \times \mathbb{P}^1, \mathscr{H}om(\mathscr{V}_i, \mathscr{V}_i)). 
\end{equation}
Moreover in the fiber over $\infty \in \mathbb{P}^1$ there is an identification 
\begin{equation}
    \mathscr{V}_i \eval_{B \times \infty} \simeq V_i \otimes \mathscr{O}_B
\end{equation}
with respect to which $\mathscr{B}_2(\infty) = \xi_2$, $\mathscr{B}_3(\infty) = \xi_3$, $\mathscr{I}(\infty) = \iota$, $\mathscr{J}(\infty) = \jmath$ corresponding to the point $p = (\xi_2, \xi_3, \iota, \jmath)$. Note that, as $p$ is in the stable locus, it is automatic that (for each point in $B$) the quasimap sends a Zariski open neighborhood of $\infty \in \mathbb{P}^1$ to the stable locus, so stability at the generic point is satisfied. 

Under morphisms $B' \to B$, families of quasimaps pull back by pulling back bundles and sections. An isomorphism of quasimaps $(\mathscr{V}, \mathscr{B}_2, \mathscr{B}_3, \mathscr{I}, \mathscr{J})$ and $(\mathscr{V}', \mathscr{B}'_2, \mathscr{B}'_3, \mathscr{I}', \mathscr{J}')$ over $B$ is an isomorphism
\begin{equation}
\begin{tikzcd}
    \phi: \mathscr{V} \arrow[r, "\sim"] & \mathscr{V}'
\end{tikzcd}
\end{equation}
of the underlying bundles on $B \times \mathbb{P}^1$, i.e. a tuple of isomorphisms $(\phi_i: \mathscr{V}_i \longrightarrow \mathscr{V}_i')_{i \in Q_0}$, which in addition is compatible with the sections. Compatibility with the sections means that the following squares in coherent sheaves on $B \times \mathbb{P}^1$ commute: 
\begin{equation} \label{quasimapisodiagrams}
\begin{tikzcd}
    \mathscr{V} \arrow[r, "\phi"] \arrow[d, "\mathscr{B}_\alpha"] & \mathscr{V}' \arrow[d, "\mathscr{B}'_\alpha"]\\
    \mathscr{V} \arrow[r, "\phi"] & \mathscr{V}'
\end{tikzcd} 
\, \, \, \text{and} \, \, \, 
\begin{tikzcd}[ampersand replacement=\&]
    \mathscr{V} \arrow[r, "\phi"] \& \mathscr{V}' \\
    \mathscr{W} \arrow[u, "\mathscr{I}"] \arrow{ur}[swap]{\mathscr{I}'}
\end{tikzcd}
\, \, \, \text{and} \, \, \, 
\begin{tikzcd}[ampersand replacement=\&]
   \mathscr{V} \arrow[r, "\phi"] \arrow[d, "\mathscr{J}"] \& \mathscr{V}' \arrow[ld, "\mathscr{J}'"] \\ 
   \mathscr{W}
\end{tikzcd}
\end{equation}
With respect to the identifications $\mathscr{V}_i \eval_{B \times \infty} \simeq \mathscr{V}'_i \eval_{B \times \infty} \simeq V_i \otimes \mathscr{O}_B$, we must have $\phi_i \eval_{B \times \infty} = 1$ since $G_{\vb{v}}$ acts freely on the stable locus. For this reason, in the rest of the paper we will regard the vector space $V_i$ as fixed and use the identification $\mathscr{V}_i \eval_{B \times \infty} \simeq V_i \otimes \mathscr{O}_B$ without comment. By a similar logic, quasimaps we consider have no nontrivial automorphisms. 

Sometimes it will be convenient to use the shorthand $f = (\mathscr{V}, \mathscr{B}_2, \mathscr{B}_3, \mathscr{I}, \mathscr{J})$ for the quasimap data. We may informally write 
\begin{equation}
    \mathsf{QM}^p(X) = \{ (\mathscr{V}, \mathscr{B}_2, \mathscr{B}_3, \mathscr{I}, \mathscr{J}) | \comm{\mathscr{B}_2}{\mathscr{B}_3} + \mathscr{I} \mathscr{J} = 0, \, \, \, f(\infty) = p\}/\text{isomorphisms}
\end{equation}
as shorthand for the corresponding moduli stack. The bundles $\mathscr{V}_i$ entering the quasimap data induce universal bundles on $\mathsf{QM}^p(X) \times \mathbb{P}^1$, and likewise the sections induce universal morphisms between these bundles. By abuse of notation, we will use the same symbols for the quasimap data at an arbitrary geometric point of the moduli space and the corresponding universal data; this will hopefully not cause any confusion because, unless explicitly stated otherwise, we will never have the need to take the fiber at any point. 

\subsubsection{Fixing the degree}
There is a locally constant function $\deg : \mathsf{QM}^p(X) \to \mathbb{Z}^{Q_0}$ defined by 
\begin{equation}
    (\mathscr{V}, \mathscr{B}_2, \mathscr{B}_3, \mathscr{I}, \mathscr{J}) \mapsto (-\deg \mathscr{V}_i)_{i \in Q_0}. 
\end{equation}
For a fixed tuple $d = (d_i)_{i \in Q_0}$, let 
\begin{equation}
    \mathsf{QM}^p_d(X) := \deg^{-1}(d).
\end{equation}
It follows from Lemma \ref{lemma:canonicalsplit} below that $\mathsf{QM}^p_d(X)$ is nonempty only if $d_i \geq 0$.

\subsection{The quiver moduli space} \label{sect:quivermoduli}
The final player necessary for this paper is the following moduli space of quiver representations. It depends on a choice of point $p = (\xi_2, \xi_3, \iota, \jmath) \in X$ as above, which one should think of as a ``frozen'' quiver representation, upon which we build a new one by a certain enlargement procedure. 

Consider the quiver $Q$ underlying $X$, with dimension vectors $\vb{v}$ and $\vb{w}$ as usual. Fix a new dimension vector $d: Q_0 \to \mathbb{Z}_{\geq 0}$ and add a vector space $U_i$ to each node of the quiver, of dimension $\dim U_i = d_i$. Define the vector space 
\begin{equation}
\begin{split}
    \mathscr{R}(\vb{v}, \vb{w}; d) := & \bigoplus_{i \in Q_0} \text{Hom}(U_i, U_i) \oplus \text{Hom}(V_i, U_i)\oplus \text{Hom}(U_i, W_i) \\
    & \bigoplus_{e \in Q_1} \text{Hom}(U_{s(e)}, U_{t(e)}) \oplus \text{Hom}(U_{s(e)}, V_{t(e)}) \oplus \text{Hom}(U_{t(e)}, U_{s(e)}) \oplus \text{Hom}(U_{t(e)}, V_{s(e)}).  
\end{split}
\end{equation}
An element of $\mathscr{R}(\vb{v}, \vb{w}; d)$ is denoted by $$(B_{1, i}, I_i, \alpha_i, B_{2, e}, J_{3, e}, B_{3, e}, J_{2, e})_{i \in Q_0, e \in Q_1}$$ or $(B_1, I, \alpha, B_2, J_3, B_3, J_2)$ for short. Define the function $\mathscr{W}_p : \mathscr{R}(\vb{v}, \vb{w}; d) \to \mathbb{C}$ by 
\begin{equation} \label{eq:definepotential}
\begin{split}
    \mathscr{W}_p & := \tr B_1 \comm{B_2}{B_3} + \tr J_2(B_2 I - I \xi_2) + \tr J_3(B_3 I - I \xi_3) + \tr I \iota \alpha \\
    & := \sum_{\substack{i \in Q_0 \\ e \in t^{-1}(i)}} \tr B_{1, i} B_{2, e} B_{3, e} - \sum_{\substack{i \in Q_0 \\ e \in s^{-1}(i)}} \tr B_{1, i} B_{3, e} B_{2, e} + \sum_{e \in Q_1} \tr J_{2, e} (B_{2, e} I_{s(e)} - I_{t(e)} \xi_{2, e}) \\
    & \qquad \qquad \qquad \qquad \qquad \qquad + \sum_{e \in Q_1} \tr J_{3, e}( B_{3, e} I_{t(e)} - I_{s(e)} \xi_{3, e}) + \sum_{i \in Q_0} \tr I_i \iota_i \alpha_i.
\end{split}
\end{equation}
On $\mathscr{R}(\vb{v}, \vb{w}; d)$ acts the group 
\begin{equation}
    GL(U) \ltimes \text{Hom}(U, V) := \prod_{i \in Q_0} GL(U_i) \ltimes \text{Hom}(U_i, V_i) \hookrightarrow \prod_{i \in Q_0} GL(U_i \oplus V_i)
\end{equation}
of matrices of the form 
\begin{equation}
    \begin{pmatrix}
        g_i && 0 \\ x_i && 1
    \end{pmatrix}
\end{equation}
for $g_i \in GL(U_i)$ and $x_i \in \text{Hom}(U_i, V_i)$, the latter regarded as an additive group on which $GL(U_i)$ acts. The group action is defined by 
\begin{equation} \label{eq:groupact}
\begin{split}
    (g, x) \cdot \begin{pmatrix} B_{1, i} && I_i \end{pmatrix} & = g_i^{-1} \begin{pmatrix} B_{1, i} && I_i \end{pmatrix} \begin{pmatrix} g_i && 0 \\ x_i && 1 \end{pmatrix} \\ 
    (g, x) \cdot \begin{pmatrix} \alpha_i && \jmath_i \end{pmatrix} & = \begin{pmatrix} \alpha_i && \jmath_i \end{pmatrix} \begin{pmatrix} g_i && 0 \\ x_i && 1 \end{pmatrix} \\
    (g, x) \cdot \begin{pmatrix} B_{2, e} && 0 \\ J_{3, e} && \xi_{2, e} \end{pmatrix} & = \begin{pmatrix} g_{t(e)} && 0 \\ x_{t(e)} && 1 \end{pmatrix}^{-1} \begin{pmatrix} B_{2, e} && 0 \\ J_{3, e} && \xi_{2, e} \end{pmatrix} \begin{pmatrix} g_{s(e)} && 0 \\ x_{s(e)} && 1 \end{pmatrix} \\ 
    (g, x) \cdot \begin{pmatrix} B_{3, e} && 0 \\ -J_{2, e} && \xi_{3, e} \end{pmatrix} & = \begin{pmatrix} g_{s(e)} && 0 \\ x_{s(e)} && 1 \end{pmatrix}^{-1} \begin{pmatrix} B_{3, e} && 0 \\ -J_{2, e} && \xi_{3, e} \end{pmatrix} \begin{pmatrix} g_{t(e)} && 0 \\ x_{t(e)} && 1 \end{pmatrix}.
\end{split}
\end{equation}
Note the group action depends on the point $p \in X$. $\mathscr{W}_p$ is obviously $GL(U)$-invariant; it is less obvious but easy to verify that it is in fact $GL(U) \ltimes \text{Hom}(U, V)$-invariant. Define the stable locus 
\begin{equation}
    \mathscr{R}(\vb{v}, \vb{w}; d)^{\text{st}} \hookrightarrow \mathscr{R}(\vb{v}, \vb{w}; d)
\end{equation}
of quiver representations satisfying the stability condition $\mathbb{C}[B_{1, i}] I_i(V_i) = U_i$ for each $i \in Q_0$. Note the stable locus is preserved by $GL(U) \ltimes \text{Hom}(U, V)$. $\mathscr{W}_p$ descends to a function on the stack quotient:
\begin{equation}
\begin{tikzcd}
    \mathscr{R}(\vb{v}, \vb{w}; d) \arrow[r, "\mathscr{W}_p"] & \mathbb{A}^1 \\
    \mathscr{R}(\vb{v}, \vb{w}; d)^{\text{st}} \arrow[u, hook] \arrow[r, two heads] & \left[ \mathscr{R}(\vb{v}, \vb{w}; d)^{\text{st}}/GL(U) \ltimes \text{Hom}(U, V) \right] \arrow[u, "\overline{\mathscr{W}}_p"] 
\end{tikzcd}
\end{equation}
and we define its critical locus as 
\begin{equation}
    M_X(p, d) := \text{Crit}(\overline{\mathscr{W}}_p)
\end{equation}
which may be viewed as a moduli stack of quiver representations. 

Note on $M_X(p, d)$ we have tautological vector bundles of rank $d_i$, which by abuse of notation we continue to denote by $U_i$ (similar to our notation for the universal quasimap bundles above). We also have the associated bundles
\begin{equation}
    F_i := \mathscr{R}(\vb{v}, \vb{w}; d)^{\text{st}} \times_{GL(U) \ltimes \text{Hom}(U, V)} (U_i \oplus V_i)
\end{equation}
which restrict to vector bundles on $\text{Crit}(\overline{\mathscr{W}}_p)$. These fit, for each $i \in Q_0$, into an exact sequence of vector bundles
\begin{equation}
\begin{tikzcd}
    0 \arrow[r] & V_i \otimes \mathscr{O}_{M_X(p, d)} \arrow[r] & F_i \arrow[r] & U_i \arrow[r] & 0
\end{tikzcd}
\end{equation}
over $M_X(p, d)$.

The main result of the paper is 

\begin{theorem} \label{thm:quasimapcrit}
    Notations as above. For any Nakajima variety $X$ and any choice of point $p \in X$, there is an isomorphism of algebraic stacks 
    \begin{equation}
        \mathsf{QM}^p_d(X) \simeq M_X(p, d). 
    \end{equation}
\end{theorem}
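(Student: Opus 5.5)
The plan is to construct mutually inverse morphisms of stacks by describing both sides functorially over an arbitrary base $B$ and checking the constructions are inverse on $B$-points. As in Nakajima's handsaw identification \cite{nakajima2011hansaw}, the bridge is to encode a quasimap through its failure to be the constant quasimap at $p$.

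\emph{From quasimaps to quiver data.} Let $f=(\mathscr{V},\mathscr{B}_2,\mathscr{B}_3,\mathscr{I},\mathscr{J})$ be a family over $B$. I would first invoke Lemma \ref{lemma:canonicalsplit}, which I expect to give a canonical injection $\mathscr{V}_i\hookrightarrow V_i\otimes\mathscr{O}_{B\times\mathbb{P}^1}$ extending the identification at $\infty$. Its cokernel $\mathscr{Q}_i$ is then flat over $B$, of relative length $d_i$, and supported on $B\times\mathbb{A}^1$. With this in hand:
\begin{itemize}
\item set $U_i:=\pi_*\mathscr{Q}_i$, a rank $d_i$ bundle on $B$;
\item let $B_{1,i}$ be multiplication by the coordinate $z$ on $U_i$;
\item let $I_i\colon V_i\to U_i$ be induced by $V_i\otimes\mathscr{O}\to\mathscr{Q}_i$ on constant sections.
\end{itemize}
Since $V_i[z]\to\Gamma(\mathscr{Q}_i)$ is surjective, the image of $I_i$ generates $U_i$ as a $\mathbb{C}[z]$-module, which is exactly $\mathbb{C}[B_{1,i}]I_i(V_i)=U_i$.

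The remaining maps come from the sections of $f$.
\begin{itemize}
\item Extended to $V\otimes\mathscr{O}$, the sections $\mathscr{B}_{2,e},\mathscr{B}_{3,e},\mathscr{I},\mathscr{J}$ become meromorphic. Writing $\mathscr{B}_{2,e}=\xi_{2,e}+(\text{correction})$, the correction is what fails to preserve the subsheaf.
\item The induced maps on the quotient $\mathscr{Q}$ define $B_{2,e},B_{3,e}$ on $U$.
\item The corrections define $J_{2,e},J_{3,e}\colon U\to V$ and $\alpha\colon U\to W$. These amount to residue-type maps, sending a class in $\mathscr{Q}$ to the principal part of $(\mathscr{B}-\xi)$ applied to a lift.
\item Different lifts, i.e.\ different splittings of $0\to V\to F\to U\to 0$, change the data exactly by the $\operatorname{Hom}(U,V)$ part of the action \eqref{eq:groupact}. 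The choice of frame of $U$ accounts for $GL(U)$.
\end{itemize}
So we get a well-defined map to the quotient stack, and the equations still need to be checked.

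\emph{Equations.} Differentiating $\mathscr{W}_p$ gives:
\begin{itemize}
\item in $J_2,J_3$: $B_2I=I\xi_2$ and $B_3I=I\xi_3$;
\item in $\alpha$: $I\iota=0$;
\item in $B_1$: $[B_2,B_3]=0$ on $U$;
\item in $B_2,B_3$: the relations $[B_1,B_3]=\pm IJ_2$ and $[B_1,B_2]=\pm IJ_3$;
\item in $I$: the relation $J_2B_2-\xi_2J_2+\dots+\iota\alpha=0$.
\end{itemize}
The first group says $V\to U$ intertwines the frozen data with the new data. The relations of the form $[B_1,\cdot]=I(\cdot)$ say that the $B_{2},B_{3}$ are induced by operators on $V[z]$ that are polynomial (indeed, after the Lemma, affine-linear) in $z$ and preserve the submodule $\Gamma(\mathscr{V})$. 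The $\partial_I$ equation, together with $\partial_{B_1}$, reproduces the moment map $[\mathscr{B}_2,\mathscr{B}_3]+\mathscr{I}\mathscr{J}=0$, split into its $U$- and $V$-components. The plan is to verify this dictionary term by term, so that the equations of the critical locus of $\overline{\mathscr{W}}_p$ match the quasimap moment map condition plus preservation of $\mathscr{V}$. This uses that the Jacobian ideal of $\overline{\mathscr{W}}_p$ on the quotient stack is computed upstairs by invariance.

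\emph{Inverse map and main obstacle.} Conversely, given stable critical data, form the $\mathbb{C}[z]$-module $U$ with $z$ acting by $B_1$. Stability gives a surjection $V[z]\to U$; let $M$ be its kernel. Then $\mathscr{V}$ is obtained by gluing $M$ on $\mathbb{A}^1$ to $V\otimes\mathscr{O}$ near $\infty$; it is a flat family of rank $v$ and degree $-d$. The operators $\xi+z$-corrections built from $J_2,J_3,\alpha$ preserve $M$ precisely because of the critical equations, yielding $\mathscr{B}_2,\mathscr{B}_3,\mathscr{I},\mathscr{J}$ with the correct value $p$ at $\infty$. Both constructions commute with base change, and their composites are the identity up to canonical isomorphism. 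For this I use that quasimaps have no automorphisms and that the $GL(U)\ltimes\operatorname{Hom}(U,V)$ action is free on the stable locus.

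I expect the main obstacle to be this equation-matching step. The potential must reproduce exactly the moment map condition, and not only its leading part, including the mixed $V$-component coming from $\partial_I\mathscr{W}_p$. This must hold scheme-theoretically and in families, so that the critical locus, with its possibly non-reduced structure, is identified with $\mathsf{QM}^p_d(X)$ rather than merely its points. I would first try to organize the check via the block matrices in \eqref{eq:groupact}: write the combined operators on $U\oplus V$ and show that $\mathscr{W}_p$ equals $\operatorname{tr}$ of $B_1$-twisted commutators of these blocks, whose variation is visibly the moment map.
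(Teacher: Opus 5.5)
\textbf{There is a genuine gap in your forward direction.} Your inverse direction is essentially right. It is the paper's construction written in a split frame: projecting $\ker(B_{1,i}-z,\ I_i)\subset F_i\boxtimes\mathscr{O}_{\mathbb{P}^1}$ to the $V_i$-summand identifies it with your $M$. The induced sections are then $\xi_{2,e}+J_{3,e}(z-B_{1,s(e)})^{-1}I_{s(e)}$, $\xi_{3,e}-J_{2,e}(z-B_{1,t(e)})^{-1}I_{t(e)}$ and $\jmath_i+\alpha_i(z-B_{1,i})^{-1}I_i$. One caveat: $B_1\mapsto B_1+Ix$ under $\operatorname{Hom}(U,V)$, so $M$ lives only on the atlas and you must check descent.

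The gap is the step ``the induced maps on the quotient $\mathscr{Q}$ define $B_{2,e},B_{3,e}$ on $U$.'' Through $\mathscr{V}\hookrightarrow V\otimes\mathscr{O}$, the section $\mathscr{B}_{2,e}$ is only a rational map $V_{s(e)}\otimes\mathscr{O}\dashrightarrow V_{t(e)}\otimes\mathscr{O}$ with poles along the support of $\mathscr{Q}$ (it is the first expression above). So it induces no map $\mathscr{Q}_{s(e)}\to\mathscr{Q}_{t(e)}$.

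Worse, any $\mathscr{O}$-linear map between the $\mathscr{Q}$'s commutes with $z$, i.e.\ intertwines $B_{1,s(e)}$ and $B_{1,t(e)}$. But $\partial\mathscr{W}_p/\partial B_{3,e}=0$ says $B_{1,t(e)}B_{2,e}-B_{2,e}B_{1,s(e)}=-I_{t(e)}J_{3,e}$. This is generically nonzero even after gauge fixing; in the handsaw example of Section~\ref{sect:splittings} it equals $-a_{i+1}b_i$. So your dictionary produces the wrong $B_2,B_3$, and the term-by-term equation matching you leave as a plan cannot close. Your heuristic that $B_2,B_3$ come from operators on $V[z]$ that are affine-linear in $z$ is also off: those operators are rational in $z$. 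What is affine-linear is the resolution differential $\zeta-z\pi$, while the lifted operators $\mathbb{B}$ are constant.

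\textbf{The missing idea is the paper's Beilinson resolution, which is what makes sheaf maps descend functorially.} Set $U_i=R^1p_*(\mathscr{V}_i(-1))$ and $F_i=R^1p_*(\mathscr{V}_i(-2))$. Corollary~\ref{cor:vanishforQM} then gives an exact sequence $0\to\mathscr{V}_i\to F_i\boxtimes\mathscr{O}_{\mathbb{P}^1}\xrightarrow{\zeta_i-z\pi_i}U_i\boxtimes\mathscr{O}_{\mathbb{P}^1}(1)\to0$. Lemma~\ref{mainlemma} says every morphism between such bundles is determined by a unique morphism of resolutions that is constant along $\mathbb{P}^1$. This has three consequences:
\begin{enumerate}
\item $\beta=R^1p_*(\mathscr{B}(-1))$, $\mathbb{B}=R^1p_*(\mathscr{B}(-2))$ and $\mathbb{J}$ are defined with no choice of splitting, so you land directly in the quotient by $GL(U)\ltimes\operatorname{Hom}(U,V)$.
\item Conditions C1--C3 of Lemma~\ref{lemma:definemap}, which are equivalent to $d\mathscr{W}_p=0$, follow from functoriality plus the moment map equation.
\item The same uniqueness shows the two constructions are mutually inverse.
\end{enumerate}

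In your language, the correct $B_{2,e}$ sends $[f]$ to the class of the polynomial part at $\infty$ of $\mathscr{B}_{2,e}f$. This map is not $\mathbb{C}[z]$-linear; it is $H^1(\mathscr{V}(-1))$-functoriality in disguise. With that definition, your residue formulas for $J_2,J_3,\alpha$ are correct up to sign and independent of the chosen lift.

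Finally, the injection of Lemma~\ref{lemma:canonicalsplit} is not canonical. It depends on choosing a basis among the words $\jmath\xi_\gamma$, and fixing it is exactly the gauge fixing of Proposition~\ref{prop:gaugefixunipotent}. The $\operatorname{Hom}(U,V)$ ambiguity therefore comes from that choice, not from lifts of classes.
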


\subsection{Beilinson resolution} \label{beilinson}
The basic principle behind the proof of Theorem \ref{thm:quasimapcrit} is the following observation. $B$ is an arbitrary base scheme; $p: B \times \mathbb{P}^1 \to B$ and $q: B \times \mathbb{P}^1 \to \mathbb{P}^1$ are the projections. For any locally free sheaf $\mathscr{V}$ on $B \times \mathbb{P}^1$, denote $\mathscr{V}(k) := \mathscr{V} \otimes q^* \mathscr{O}_{\mathbb{P}^1}(k)$. Let $\mathscr{E}$ be a vector bundle on $B \times \mathbb{P}^1$ such that, for every point $b \hookrightarrow B$,
\begin{equation} \label{vanishassumpt}
    H^0 (\mathbb{P}^1, \mathscr{E}(-1) |_{p^{-1}(b)}) = 0
\end{equation}
and let $\mathscr{F}$ be another such bundle. Note $H^0(\mathbb{P}^1, \mathscr{E}(-2) |_{p^{-1}(b)}) = 0$ as well as a result of the inclusion of sheaves $\mathscr{E}(-2) |_{p^{-1}(b)}\hookrightarrow \mathscr{E}(-1) |_{p^{-1}(b)}$. Let $\phi: \mathscr{E} \to \mathscr{F}$ be a morphism of sheaves on $B \times \mathbb{P}^1$.  We consider $B \times \mathbb{P}^1 \times \mathbb{P}^1$, and let $\pi: B \times \mathbb{P}^1 \times \mathbb{P}^1 \to B \times \mathbb{P}^1$ be the projection onto the first and third factor, likewise $\pi
' : B \times \mathbb{P}^1 \times \mathbb{P}^1 \to B \times \mathbb{P}^1$ is the projection onto the first and second factor.

Let $w$ and $z$ be affine coordinates on the first and second $\mathbb{P}^1$. We regard the affine coordinate $z$ as a global section of the line bundle $\mathscr{O}_{\mathbb{P}^1}(1)$, equivalently as a meromorphic function on $\mathbb{P}^1$ with a simple pole at $\infty \in \mathbb{P}^1$. We have the resolution of the diagonal $\Delta \hookrightarrow \mathbb{P}^1 \times \mathbb{P}^1$:
\begin{equation}
\begin{tikzcd}
    0 \arrow[r] &  \mathscr{O}_{\mathbb{P}^1}(-1) \boxtimes \mathscr{O}_{\mathbb{P}^1}(-1) \arrow[r, "w - z"] & \mathscr{O}_{\mathbb{P}^1 \times \mathbb{P}^1} \arrow[r] & \mathscr{O}_\Delta \arrow[r] & 0. 
\end{tikzcd}
\end{equation}
Pulling this back under the flat morphism $B \times \mathbb{P}^1 \times \mathbb{P}^1 \to \mathbb{P}^1 \times \mathbb{P}^1$ and tensoring by the locally free sheaf $(\pi')^*(\mathscr{E}(-1))$, we have an induced commutative diagram 
\begin{equation}
\begin{tikzcd} 
0 \arrow[r] & \mathscr{E}(-2) \boxtimes \mathscr{O}_{\mathbb{P}^1}(-1) \arrow[r, "w - z"] \arrow[d, "\phi \boxtimes 1"] & \mathscr{E}(-1) \boxtimes \mathscr{O}_{\mathbb{P}^1} \arrow[d, "\phi \boxtimes 1"] \arrow[r] & (\pi')^*\mathscr{E}(-1) \otimes \mathscr{O}_{B \times \Delta} \arrow[d, "\phi"] \arrow[r] & 0  \\
0 \arrow[r] & \mathscr{F}(-2) \boxtimes \mathscr{O}_{\mathbb{P}^1}(-1) \arrow[r, "w - z"] & \mathscr{F}(-1) \boxtimes \mathscr{O}_{\mathbb{P}^1} \arrow[r] & (\pi')^*\mathscr{F}(-1) \otimes \mathscr{O}_{B \times \Delta} \arrow[r] & 0 
\end{tikzcd}
\end{equation}
exact on the top and bottom rows. We consider the associated long exact sequence of $R^i \pi_*$ and then tensor by $\mathscr{O}_B \boxtimes \mathscr{O}_{\mathbb{P}^1}(1)$, yielding 
\begin{equation} \label{eq:mainresolvediagram}
\begin{tikzcd}
    0 \arrow[r] & \mathscr{E} \arrow[r] \arrow[d, "\phi"] & R^1 p_*( \mathscr{E}(-2)) \boxtimes \mathscr{O}_{\mathbb{P}^1} \arrow[d] \arrow[r] & R^1 p_*(\mathscr{E}(-1)) \boxtimes \mathscr{O}_{\mathbb{P}^1}(1) \arrow[d] \arrow[r] & 0 \\
    0 \arrow[r] & \mathscr{F} \arrow[r] & R^1 p_*(\mathscr{F}(-2)) \boxtimes \mathscr{O}_{\mathbb{P}^1} \arrow[r] & R^1 p_*( \mathscr{F}(-1)) \boxtimes \mathscr{O}_{\mathbb{P}^1}(1) \arrow[r] & 0. 
\end{tikzcd}
\end{equation}
All vanishings follow from the hypothesis \eqref{vanishassumpt} above and the absence of higher cohomology on a point; we also used e.g. $$ R^1\pi_*( \mathscr{E}(-2) \boxtimes \mathscr{O}_{\mathbb{P}^1}(-1)) \simeq R^1 \pi_*( (\pi')^* (\mathscr{E}(-2)) \otimes q^* \mathscr{O}_{\mathbb{P}^1}(-1) \simeq R^1 p_*(\mathscr{E}(-2)) \boxtimes \mathscr{O}_{\mathbb{P}^1}(-1) $$ by the projection formula and flat base change. The unwritten vertical maps are induced by the functor $R^1 p_*( - \otimes q^* \mathscr{O}_{\mathbb{P}^1}(-k))$ applied to $\phi$, for $k = 1, 2$. A useful consequence of the commutativity of the left square and the exactness on the top and bottom rows is the following 
\begin{lemma} \label{mainlemma}
Let $\mathscr{E}, \mathscr{F}$ be vector bundles on $B \times \mathbb{P}^1$ satisfying the vanishing assumption \eqref{vanishassumpt}. Any morphism of sheaves $\phi: \mathscr{E} \to \mathscr{F}$ over $B \times \mathbb{P}^1$ is uniquely determined by the induced morphism $R^1 p_*(\mathscr{E}(-2)) \to R^1 p_*(\mathscr{F}(-2))$ of locally free sheaves on $B$. This is preserved under base change by arbitrary morphisms $B' \to B$. 
\end{lemma}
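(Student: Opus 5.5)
We derive this directly from the commutative diagram \eqref{eq:mainresolvediagram}, whose rows are exact. Write $E_2 := R^1 p_*(\mathscr{E}(-2))$ and $F_2 := R^1 p_*(\mathscr{F}(-2))$. The left square says that the composite $\mathscr{E} \xrightarrow{\phi} \mathscr{F} \hookrightarrow F_2 \boxtimes \mathscr{O}_{\mathbb{P}^1}$ equals the composite $\mathscr{E} \hookrightarrow E_2 \boxtimes \mathscr{O}_{\mathbb{P}^1} \xrightarrow{R^1p_*(\phi(-2)) \boxtimes 1} F_2 \boxtimes \mathscr{O}_{\mathbb{P}^1}$. Here $R^1p_*(\phi(-2))$ denotes the morphism $R^1p_*(\phi \otimes q^*\mathscr{O}(-2))$, i.e. the middle vertical arrow of the diagram, which is pulled back from $B$ by construction: it is $R^1\pi_*$ of $\phi \boxtimes 1$, identified with $R^1 p_*(\phi(-2)) \boxtimes 1$ by the same projection-formula and flat-base-change identification used for the objects.

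Since the bottom row is exact on the left, the map $\mathscr{F} \to F_2 \boxtimes \mathscr{O}_{\mathbb{P}^1}$ is a monomorphism of sheaves. Suppose $\phi, \phi'$ are two morphisms inducing the same map $E_2 \to F_2$. The displayed identity gives equal composites after postcomposing with this monomorphism, hence $\phi = \phi'$. This proves the uniqueness claim. Note that local freeness of $E_2, F_2$ is not needed for this step. It does hold, however: by \eqref{vanishassumpt}, $p_*$ of $\mathscr{E}(-2)$ vanishes fiberwise, so cohomology and base change gives that $R^1p_*$ is locally free of rank $-\chi$ and commutes with base change.

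For the base change statement, let $h: B' \to B$ be arbitrary. The vanishing hypothesis is fiberwise, so it is inherited by $(h\times 1)^*\mathscr{E}$ and $(h \times 1)^*\mathscr{F}$. Moreover, by cohomology and base change (using $H^0 = 0$ on fibers and the fact that $R^2$ vanishes for relative dimension one), $R^1 p'_*((h\times1)^*\mathscr{E}(-2)) \simeq h^* E_2$, compatibly with the morphisms induced by $\phi$. Hence the whole diagram \eqref{eq:mainresolvediagram} pulls back to the corresponding diagram for $B'$, and the same argument applies there, with the induced map being $h^*$ of the original. A small point to verify is that the first map of the top row stays injective after pullback. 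This holds because the pulled-back row is exactly the row constructed over $B'$, which is exact by the same argument as over $B$. Alternatively, it follows because the cokernel $R^1p_*(\mathscr{E}(-1))\boxtimes\mathscr{O}(1)$ is locally free, so the sequence is locally split.

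The main (mild) obstacle is naturality: checking that the identifications via projection formula and flat base change are functorial in $\phi$ and compatible with pullback along $B' \to B$. This is standard, since all the maps involved are natural transformations of functors.
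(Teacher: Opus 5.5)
Your proposal is correct and follows the same approach as the paper. Uniqueness comes from the commutativity of the left square of \eqref{eq:mainresolvediagram} together with injectivity of $\mathscr{F} \to R^1p_*(\mathscr{F}(-2)) \boxtimes \mathscr{O}_{\mathbb{P}^1}$, and local freeness and compatibility with base change come from the fiberwise vanishing \eqref{vanishassumpt} plus cohomology and base change; you simply spell out details the paper leaves implicit.
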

The adjective ``locally free'' and the part about base change follows from the vanishing assumption \eqref{vanishassumpt} and cohomology/base change (for a friendly introduction to the latter see e.g. Chapter 25 of \cite{vakil}). 

\section{Main construction}
With the preliminaries in place, we will prove Theorem \ref{thm:quasimapcrit} by constructing maps in both directions and showing they are inverse to each other. The proof occupies Sections \ref{quasi2quiver} and \ref{quiver2quasi}; in Section \ref{sect:splittings} we note a potential useful corollary. 

\subsubsection{Vanishing lemma}
The construction given below relies on the following crucial consequence of the fact that the quasimap $f = (\mathscr{V}, \mathscr{B}_2, \mathscr{B}_3, \mathscr{I}, \mathscr{J})$ evaluates at $\infty \in \mathbb{P}^1$ to a chosen point $p = (\xi_2, \xi_3, \iota, \jmath)$ in the stable locus. 

\begin{lemma} \label{lemma:canonicalsplit}
    For any point $ f = (\mathscr{V}, \mathscr{B}_2, \mathscr{B}_3, \mathscr{I}, \mathscr{J}) \hookrightarrow \mathsf{QM}^p(X)$ and each $i \in Q_0$, we have an exact sequence of sheaves on $\mathbb{P}^1$ as follows
    \begin{equation} \label{eq:splittingmap}
    \begin{tikzcd}
        0 \arrow[r] & \mathscr{V}_i \arrow[r] & V_i \otimes \mathscr{O}_{\mathbb{P}^1} \arrow[r] & \textnormal{torsion} \arrow[r] & 0.
    \end{tikzcd}
    \end{equation}
\end{lemma}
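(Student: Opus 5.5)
The plan is to produce a map $\mathscr{V}_i \to V_i \otimes \mathscr{O}_{\mathbb{P}^1}$ that is an isomorphism on the fiber at $\infty$. Then it is generically an isomorphism, so it is injective, and its cokernel is torsion. The map will be built from the ``observability'' map formed from $\mathscr{J}$ and words in $\mathscr{B}_2, \mathscr{B}_3$. I work at a single geometric point $f$ of $\mathsf{QM}^p(X)$, and I only claim that such an exact sequence \emph{exists}; whether the map is canonical is left open.

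\textbf{Step 1 (observability map).} For each $i \in Q_0$ and each path (word) $\gamma$ in the doubled quiver from $i$ to some vertex $j$, of length at most $N$ (with $N$ large, say $N \geq \sum_k v_k$), consider $\mathscr{J}_j \mathscr{B}_\gamma : \mathscr{V}_i \to \mathscr{W}_j$. Assemble these into
\[
\Phi_i : \mathscr{V}_i \to E_i := \bigoplus_{\gamma} W_{t(\gamma)} \otimes \mathscr{O}_{\mathbb{P}^1}.
\]
Let $\Phi_{p,i} : V_i \to \bigoplus_\gamma W_{t(\gamma)}$ be the analogous constant map built from $(\xi_2, \xi_3, \jmath)$. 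The fiber of $\Phi_i$ at $\infty$ is $\Phi_{p,i}$. The subspaces $\ker \Phi_{p,i} \subseteq V_i$ form a collection that is $\xi_2, \xi_3$-invariant (by the Cayley--Hamilton bound on word length) and lies in $\ker \jmath$. Since $p$ satisfies the stability condition, $\ker \Phi_{p,i} = 0$, so $\Phi_{p,i}$ is injective.

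\textbf{Step 2 (sections of the dual).} Dualize to get $\Phi_i^\vee : E_i^\vee \to \mathscr{V}_i^\vee$. Each constant vector $\lambda \in \bigoplus_\gamma W_{t(\gamma)}^*$ gives a global section $\Phi_i^\vee(\lambda) \in H^0(\mathbb{P}^1, \mathscr{V}_i^\vee)$. Its value at $\infty$ is $\Phi_{p,i}^\vee(\lambda) \in V_i^*$, and $\Phi_{p,i}^\vee$ is surjective by Step 1. So I choose a linear splitting $\sigma : V_i^* \to \bigoplus_\gamma W^*_{t(\gamma)}$ with $\Phi^\vee_{p,i} \sigma = 1$. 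This defines a morphism
\[
s_i := \Phi_i^\vee \circ \sigma : V_i^* \otimes \mathscr{O}_{\mathbb{P}^1} \to \mathscr{V}_i^\vee,
\]
whose fiber at $\infty$ is the identity of $V_i^*$ under the given identification $\mathscr{V}_i|_\infty \simeq V_i$.

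\textbf{Step 3 (conclusion).} The determinant of $s_i$ is a section of the line bundle $\det \mathscr{V}_i^\vee$ that is nonzero at $\infty$. Hence $s_i$ is an isomorphism on a dense open set containing $\infty$. Dualizing gives $s_i^\vee : \mathscr{V}_i \to V_i \otimes \mathscr{O}_{\mathbb{P}^1}$, a map between locally free sheaves of the same rank on a smooth curve that is generically an isomorphism. Its kernel is a torsion subsheaf of a locally free sheaf, hence zero, and its cokernel is supported on the finite zero locus of $\det s_i^\vee$, hence torsion. This gives the exact sequence \eqref{eq:splittingmap}. It also yields $-\deg \mathscr{V}_i = \operatorname{length}(\text{torsion}) \geq 0$, which is the consequence used to show $d_i \geq 0$.

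\textbf{Expected obstacle.} I expect the main difficulty to be canonicity of the map, since the name of the lemma suggests the author has a canonical one in mind. The choice of $\sigma$ is harmless exactly when $\Phi_i^\vee$ kills $\ker \Phi^\vee_{p,i}$. In that case every section $\Phi_i^\vee(\lambda)$ is determined by $\lambda|_{\operatorname{im}\Phi_{p,i}}$, and $s_i$ does not depend on $\sigma$. To check this I would try to show that a section of $\mathscr{V}_i^\vee$ lying in the image of $\Phi^\vee_i$ and vanishing at $\infty$ must vanish identically. For that I would use the moment map relation together with the fact that the relevant sections span a space of dimension $\dim V_i$. For the statement as written, only existence is needed, and Steps 1--3 suffice.
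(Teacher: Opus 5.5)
Your proof is correct and is essentially the paper's argument. The paper picks a subcollection of the $\jmath_{m\alpha}\xi_\gamma$ forming a basis of $V_i^*$, which is a special case of your splitting $\sigma$, substitutes $\mathscr{J},\mathscr{B}$ for $\jmath,\xi$, and notes that the resulting map $\mathscr{V}_i\to V_i\otimes\mathscr{O}_{\mathbb{P}^1}$ is the identity at $\infty$, hence injective with torsion cokernel. You should drop the canonicity program, since the map genuinely depends on $\sigma$ and the section you hoped to show vanishes need not. For example, take $X=T^*\mathbb{P}^1$ (one vertex, $v=1$, $w=2$, $\jmath=(1,0)$) and the degree-one quasimap $\mathscr{J}=(z-a,\,c)$ with $c\neq 0$: the choice $\sigma(1)=(1,t)$ gives a cokernel supported at $z=a-tc$. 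The paper itself treats this splitting as choice-dependent in its section on splittings.
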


\begin{proof}
    Observe that the stability condition may be rephrased as the assertion that $V^*$ is generated under $\jmath$ by the action of $\xi_2, \xi_3$. Pick bases $e_{i \alpha}$, $\alpha = 1, \dots w_i$ in each framing space $W_i$, and write $\jmath_{i \alpha} \in V^*_i$ for the components of $\jmath$ in this basis (this is equivalent to the Crawley-Boevey trick). For any such pair $(m, \alpha)$ and any path $\gamma$ in $Q$ starting at $i \in Q_0$ and ending at $m \in Q_0$, define an element 
\begin{equation}
    j_{m \alpha} \xi_\gamma := j_{m \alpha} \overleftarrow{\prod}_{e \in \gamma} \, \, \, \xi_{\text{2 or 3}, e} \in V_i^*
\end{equation}
where $\xi_2$ is chosen if $\gamma$ agrees with the orientation of $Q$ and $\xi_3$ is chosen if $\gamma$ is opposite the orientation of $Q$. Let $\text{Path}_{im}(Q)$ denote the set of paths in $Q$ starting at $i$ and ending at $m$. That $(\xi_2, \xi_3, \iota, \jmath)$ is a stable quiver representation implies that, for each $i \in Q_0$, there is a subcollection of
\begin{equation}
    \Bigg\{ j_{m \alpha} \xi_\gamma \Bigg\}_{\substack{m \in Q_0 \\ 
    \alpha \in \{ 1, \dots, w_m \} \\ \gamma \in \text{Path}_{im}(Q)}} 
\end{equation}
which forms a basis in $V_i^*$; let $\sigma^{ia}$, $a = 1, \dots v_i$ denote such a basis, and $\tau_{ia}$ be the dual basis in $V_i$. Note for each quasimap $(\mathscr{V}, \mathscr{B}_2, \mathscr{B}_3, \mathscr{I}, \mathscr{J})$, $\sigma^{ia}$ induces a morphism of sheaves on $\mathbb{P}^1$, $\Sigma^{ia}: \mathscr{V}_i \to \mathscr{O}_{\mathbb{P}^1}$ defined by making the replacements $\jmath \mapsto \mathscr{J}$, $\xi_{2, 3} \mapsto \mathscr{B}_{2, 3}$. 

Then we may define a morphism of sheaves $\mathscr{V}_i \to V_i \otimes \mathscr{O}_{\mathbb{P}^1}$ by 
\begin{equation}
    \sum_{a = 1}^{v_i} \tau_{ia} \otimes \Sigma^{ia}. 
\end{equation}
It induces the identity map in the fiber over $\infty \in \mathbb{P}^1$ by construction, whence is a fiberwise isomorphism in a Zariski open neighborhood of $\infty$. This establishes that it is an injective sheaf homomorphism with torsion cokernel, which is what we wanted to prove. 
\end{proof}

To illustrate the proof of this lemma, consider the example where $X = \text{Hilb}_n(\mathbb{C}^2)$ and $p = \lambda$ is a monomial ideal labeled by partition $\lambda$. The boxes of $\lambda$ may be visualized with their corners at integer points $(i, j)$ on a two-dimensional coordinate grid. A distinguished basis in $V^*$ is given by $\{ \jmath \xi_2^{i - 1} \xi_3^{j - 1} \} _{(i, j) \in \lambda}$, and the inclusion $\mathscr{V} \hookrightarrow V \otimes \mathscr{O}_{\mathbb{P}^1}$ is given on local sections by $v(z) \mapsto (\mathscr{J}(z) \mathscr{B}_2^{i - 1}(z) \mathscr{B}_3^{j - 1}(z)v(z))_{(i, j) \in \lambda}$. 

\begin{cor} \label{cor:vanishforQM}
In any family of quasimaps over a base scheme $B$, for each point $b \hookrightarrow B$ we have 
\begin{equation}
    H^0(\mathbb{P}^1, \mathscr{V}_i(-1) |_{b \times \mathbb{P}^1}) = 0
\end{equation}
for each bundle $\mathscr{V}_i$, $i \in Q_0$. 
\end{cor}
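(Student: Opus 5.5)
The corollary follows by restricting the family to the fiber over $b$ and applying Lemma \ref{lemma:canonicalsplit}. Fix a point $b \hookrightarrow B$. Pulling back the family along $b \hookrightarrow B$ gives a quasimap $f_b = (\mathscr{V}|_{b \times \mathbb{P}^1}, \dots)$ in $\mathsf{QM}^p(X)$, since families pull back by pulling back bundles and sections. This is still a point of $\mathsf{QM}^p(X)$ because the identification at $\infty$ with $p$ is preserved under pullback. Lemma \ref{lemma:canonicalsplit} then provides, for each $i \in Q_0$, an injective sheaf map
\begin{equation*}
    \mathscr{V}_i|_{b \times \mathbb{P}^1} \hookrightarrow V_i \otimes \mathscr{O}_{\mathbb{P}^1}
\end{equation*}
with torsion cokernel.

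Next I would twist this injection by $\mathscr{O}_{\mathbb{P}^1}(-1)$. This operation is exact because $\mathscr{O}_{\mathbb{P}^1}(-1)$ is locally free, so it yields an injection
\begin{equation*}
    \mathscr{V}_i(-1)|_{b \times \mathbb{P}^1} \hookrightarrow V_i \otimes \mathscr{O}_{\mathbb{P}^1}(-1).
\end{equation*}
Since $H^0$ is left exact, we get an injection on global sections
\begin{equation*}
    H^0(\mathbb{P}^1, \mathscr{V}_i(-1)|_{b \times \mathbb{P}^1}) \hookrightarrow V_i \otimes H^0(\mathbb{P}^1, \mathscr{O}_{\mathbb{P}^1}(-1)) = 0,
\end{equation*}
which proves the claim.

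The only point needing care is that Lemma \ref{lemma:canonicalsplit} is stated for geometric points of $\mathsf{QM}^p(X)$, whereas here $b$ is an arbitrary point of the scheme $B$. For a non-closed point, I would base change to the residue field, or to an algebraic closure of it; the construction in the lemma's proof works verbatim over any field. Alternatively, the vanishing of $H^0$ can be checked after a field extension by flat base change. Neither step poses a real obstacle, so the argument is essentially immediate.
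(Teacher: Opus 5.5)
Your proposal is correct and follows the paper's own proof: restrict the family to the fiber over $b$, twist the injection $\mathscr{V}_i \hookrightarrow V_i \otimes \mathscr{O}_{\mathbb{P}^1}$ from Lemma~\ref{lemma:canonicalsplit} by $\mathscr{O}_{\mathbb{P}^1}(-1)$, and use left exactness of $H^0$ (the start of the long exact cohomology sequence) together with $H^0(\mathbb{P}^1, \mathscr{O}_{\mathbb{P}^1}(-1)) = 0$. Your remark about handling non-closed points by passing to the residue field (or its algebraic closure) is a harmless refinement that the paper leaves implicit.
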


\begin{proof}
    Pull the family back to $b$, tensor \eqref{eq:splittingmap} by $\mathscr{O}_{\mathbb{P}^1}(-1)$ and go to the long exact cohomology sequence.
\end{proof}

Fix a degree vector $d = (d_i)_{i \in Q_0}$. Let $p: \mathsf{QM}^p_d(X) \times \mathbb{P}^1 \to \mathsf{QM}^p_d(X)$, $q: \mathsf{QM}^p_d(X) \times \mathbb{P}^1 \to \mathbb{P}^1$ be the natural projections, and consider the universal bundles $\mathscr{V}_i$ over $\mathsf{QM}^p_d(X) \times \mathbb{P}^1$.
\begin{cor}
For each $i \in Q_0$, and each integer $k > 0$, $$R^1p_*(\mathscr{V}_i \otimes q^* \mathscr{O}_{\mathbb{P}^1}(-k))$$
is a vector bundle of rank $d_i + (k - 1) v_i$ on the stack $\mathsf{QM}^p_d(X)$. 
\end{cor}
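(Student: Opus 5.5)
The argument is cohomology and base change on the universal family, with the rank computed fiberwise by Riemann--Roch. Since $\mathsf{QM}^p_d(X)$ is a stack, I would work on an arbitrary scheme $B$ with a smooth (or any) morphism $B \to \mathsf{QM}^p_d(X)$ and the pulled-back family $\mathscr{V}_i$ on $B \times \mathbb{P}^1$. Both formation of $R^1 p_*$ (given the vanishing below) and local freeness are compatible with base change and descend, so it suffices to show that $R^1 p_*(\mathscr{V}_i(-k))$ is locally free of rank $d_i + (k-1)v_i$ on every such $B$, compatibly with pullback.

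First I establish the fiberwise vanishing of $H^0$. For each point $b \in B$, Corollary \ref{cor:vanishforQM} gives $H^0(\mathbb{P}^1, \mathscr{V}_i(-1)|_{b \times \mathbb{P}^1}) = 0$. For $k \geq 1$ the sheaf inclusion $\mathscr{V}_i(-k) \hookrightarrow \mathscr{V}_i(-1)$, tensored with the fiber, gives $H^0(\mathbb{P}^1, \mathscr{V}_i(-k)|_{b \times \mathbb{P}^1}) = 0$ as well. This is the same remark the paper makes in Section \ref{beilinson} for $k=2$.

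Next I compute the rank on each fiber. The restriction $\mathscr{V}_i|_{b\times\mathbb{P}^1}$ has rank $v_i$ and degree $-d_i$, by the definition of $\mathsf{QM}^p_d$ (degree is locally constant, so it is constant on the fibers over $B$). Riemann--Roch on $\mathbb{P}^1$ then gives
\begin{equation*}
\chi(\mathscr{V}_i(-k)|_{b\times\mathbb{P}^1}) = -d_i - k v_i + v_i,
\end{equation*}
and combined with $H^0 = 0$ this yields
\begin{equation*}
h^1 = d_i + (k-1)v_i .
\end{equation*}
This is independent of $b$, and there is no $H^2$ on a curve.

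Finally I apply cohomology and base change. The sheaf $\mathscr{V}_i(-k)$ is locally free on $B \times \mathbb{P}^1$, hence flat over $B$, and $p$ is proper. Because $H^2$ vanishes on fibers, $R^1 p_*$ commutes with arbitrary base change. The function $b \mapsto h^1$ is constant, which on a reduced base alone would give local freeness. To avoid any reducedness hypothesis I instead use the vanishing $H^0 = 0$ at every point: by the standard base change theorem, $p_*$ vanishes and commutes with base change, so the "next" term $R^1 p_*$ is locally free. Equivalently, the pushforward is computed locally by a two-term complex of free modules $K^0 \to K^1$ whose fiberwise kernel vanishes everywhere, so $K^0 \to K^1$ is fiberwise injective and its cokernel is locally free. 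Its rank is $d_i + (k-1)v_i$ by the fiber computation, and this base-change-compatible construction descends to the stack.

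The only delicate point is this last step, namely getting local freeness over possibly non-reduced bases from the pointwise vanishing of $H^0$. I would handle it with the two-term complex argument just described rather than by appealing to constancy of $h^1$.
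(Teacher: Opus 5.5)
Your proposal is correct and follows the same route as the paper: fiberwise $H^0$ vanishing from Corollary \ref{cor:vanishforQM}, cohomology and base change, and Riemann--Roch on a fiber for the rank $d_i + (k-1)v_i$. You also supply two details the paper leaves implicit, namely extending the vanishing to all $k \geq 1$ via $\mathscr{V}_i(-k) \hookrightarrow \mathscr{V}_i(-1)$, and the two-term complex argument that gives local freeness over possibly non-reduced bases.
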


\begin{proof}
By Corollary \ref{cor:vanishforQM} and cohomology/base change. To compute the rank, take the fiber over any point of $\mathsf{QM}^p_d(X)$ and apply Riemann-Roch on $\mathbb{P}^1$. 
\end{proof}

\subsection{From quasimaps to quiver} \label{quasi2quiver}
In this section we construct a morphism $\Psi: \mathsf{QM}^p_d(X) \to M_X(p, d)$, which will be our candidate isomorphism. Because $M_X(p, d)$ involves a lot of data, it is worth spelling out what it means to give a morphism into it, before describing $\Psi$. 
\begin{lemma} \label{lemma:definemap}
$Y$ is an algebraic stack. A morphism $Y \to M_X(p, d)$ is equivalent to the following data. 
\begin{enumerate}
    \item For $i \in Q_0$, a vector bundle $\mathscr{U}_i$ over $Y$ of rank $d_i$.
    \item For $i \in Q_0$, a vector bundle $\mathscr{F}_i$ over $Y$ of rank $d_i + v_i$, together with a short exact sequence 
    \begin{equation}
    \begin{tikzcd}
        0 \arrow[r] & V_i \otimes \mathscr{O}_Y \arrow[r] & \mathscr{F}_i \arrow[r] & \mathscr{U}_i \arrow[r] & 0.
    \end{tikzcd}
    \end{equation}
    \item Morphisms $\zeta_i : \mathscr{F}_i \to \mathscr{U}_i$ of bundles over $Y$ (note these are distinct from the surjections one line above).
    \item Morphisms $\mathbb{J}_i : \mathscr{F}_i \to W_i \otimes \mathscr{O}_Y$ such that 
    \begin{equation}
    \begin{tikzcd}
        V_i \otimes \mathscr{O}_Y \arrow[r, hook] \arrow[rd, "\jmath_i"] & \mathscr{F}_i \arrow[d, "\mathbb{J}_i"] \\ & W_i \otimes \mathscr{O}_Y
    \end{tikzcd}
    \end{equation}
    commutes. 
    \item For $e \in Q_1$, morphisms $\beta_{2, e}: \mathscr{U}_{s(e)} \to \mathscr{U}_{t(e)}$, $\beta_{3, e} : \mathscr{U}_{t(e)} \to \mathscr{U}_{s(e)}$, $\mathbb{B}_{2, e}: \mathscr{F}_{s(e)} \to \mathscr{F}_{t(e)}$, $\mathbb{B}_{3, e}: \mathscr{F}_{t(e)} \to \mathscr{F}_{s(e)}$ such that there is a commutative diagram 
    \begin{equation}
    \begin{tikzcd}
        0 \arrow[r] & V_{s(e)} \otimes \mathscr{O}_Y \arrow[r] \arrow[d, "\xi_{2, e}"] & \mathscr{F}_{s(e)} \arrow[r] \arrow[d, "\mathbb{B}_{2, e}"] & \mathscr{U}_{s(e)} \arrow[r] \arrow[d, "\beta_{2, e}"] & 0 \\
        0 \arrow[r] & V_{t(e)} \otimes \mathscr{O}_Y \arrow[r] & \mathscr{F}_{t(e)} \arrow[r] & \mathscr{U}_{t(e)} \arrow[r] & 0 
    \end{tikzcd}
    \end{equation}
    and a similar one with $2$ replaced by $3$ and $s(e), t(e)$ exchanged. 
\end{enumerate}
These data must satisfy the following conditions. 
\begin{enumerate}
    \item[(C1)] For $e \in Q_1$, the square 
    \begin{equation} \label{eq:critsqaure1}
    \begin{tikzcd}
        \mathscr{F}_{s(e)} \arrow[r, "\zeta_{s(e)}"] \arrow[d, "\mathbb{B}_{2, e}"] & \mathscr{U}_{s(e)} \arrow[d, "\beta_{2, e}"] \\ 
        \mathscr{F}_{t(e)} \arrow[r, "\zeta_{t(e)}"] & \mathscr{U}_{t(e)}
    \end{tikzcd}
    \end{equation}
    commutes, and a similar one with $2$ replaced by $3$ and $s(e), t(e)$ exchanged. 
    \item[(C2)] Define $\mathbb{I}_i: W_i \otimes \mathscr{O}_Y \to \mathscr{F}_i$ as the composition 
    $$ \begin{tikzcd}
        W_i \otimes \mathscr{O}_Y \arrow[r, "\iota_i"] & V_i \otimes \mathscr{O}_Y \arrow[r, hook] & \mathscr{F}_i.
    \end{tikzcd}$$
    Then the composition $\zeta_i \mathbb{I}_i = 0$ for $i \in Q_0$.
    \item[(C3)] For $i \in Q_0$, the morphism 
    \begin{equation}
        \sum_{e \in t^{-1}(i)} \mathbb{B}_{2, e} \mathbb{B}_{3, e} - \sum_{e \in s^{-1}(i)} \mathbb{B}_{3, e} \mathbb{B}_{2, e} + \mathbb{I}_i \mathbb{J}_i = 0. 
    \end{equation}
\end{enumerate}
Finally, each point of $Y$ must map to a quiver representation satisfying the stability condition $\mathbb{C}[B_{1, i}] I_i (V_i) = U_i$. 
\end{lemma}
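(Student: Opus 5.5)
The plan is to unwind the definition of $M_X(p,d)$ as the critical locus of $\overline{\mathscr{W}}_p$ on a quotient stack, in two stages. First I identify morphisms $Y \to [\mathscr{R}(\vb{v},\vb{w};d)^{\text{st}}/G]$, with $G := GL(U)\ltimes \text{Hom}(U,V)$. Then I show that the partial derivatives of $\mathscr{W}_p$ vanishing is exactly (C1)--(C3).

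\textbf{Step 1: the quotient stack.} By definition, a map $Y \to [\mathscr{R}^{\text{st}}/G]$ is a $G$-torsor $P \to Y$ together with a $G$-equivariant map $P \to \mathscr{R}^{\text{st}}$.
\begin{itemize}
\item By \eqref{eq:groupact}, $G$ is exactly the subgroup of $\prod_i GL(U_i\oplus V_i)$ that preserves the subspace $V_i \subset U_i \oplus V_i$, acts on it by the identity, and acts arbitrarily on the quotient $U_i$.
\item Hence a $G$-torsor is the same as the items (1)--(2) of the lemma: a rank $d_i+v_i$ bundle $\mathscr{F}_i$ (the associated bundle $P\times_G(U_i\oplus V_i)$) with a sub-bundle trivialized as $V_i\otimes\mathscr{O}_Y$ and quotient $\mathscr{U}_i$ of rank $d_i$.
\end{itemize}
Next I translate the equivariant map, reading off \eqref{eq:groupact} block by block.
\begin{itemize}
\item The row $(B_{1,i}\ I_i)$ transforms as a map $U_i\oplus V_i \to U_i$, so it gives $\zeta_i:\mathscr{F}_i\to\mathscr{U}_i$.
\item The row $(\alpha_i\ \jmath_i)$ transforms as a map to $W_i$ whose restriction to $V_i$ is fixed to be $\jmath_i$. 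This gives $\mathbb{J}_i$ with the commuting triangle of item (4).
\item The lower-triangular block matrices $\begin{pmatrix} B_{2,e}&0\\ J_{3,e}&\xi_{2,e}\end{pmatrix}$ transform by conjugation. They therefore give maps $\mathbb{B}_{2,e}:\mathscr{F}_{s(e)}\to\mathscr{F}_{t(e)}$ that preserve the $V$-sub-bundles, restrict there to $\xi_{2,e}$, and induce $\beta_{2,e}=B_{2,e}$ on the quotients. The same holds for $\mathbb{B}_{3,e}$, whose lower-left entry is $-J_{2,e}$.
\end{itemize}
This is item (5). The stability condition is $G$-invariant, so it descends to the pointwise condition at the end of the lemma.

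\textbf{Step 2: the critical locus.} Since $\mathscr{W}_p$ is $G$-invariant and the $G$-action is free on the stable locus, the classical critical locus of $\overline{\mathscr{W}}_p$ is $[\text{Crit}(\mathscr{W}_p|_{\mathscr{R}^{\text{st}}})/G]$. I would justify this by noting that $d\mathscr{W}_p$ annihilates the infinitesimal orbit directions, so $d\overline{\mathscr{W}}_p=0$ pulls back to $d\mathscr{W}_p=0$. Hence it suffices to check that, in the local frame, the vanishing of the partials $\partial_{B_1},\partial_{B_2},\partial_{B_3},\partial_{J_2},\partial_{J_3},\partial_I,\partial_\alpha$ of \eqref{eq:definepotential} is equivalent to (C1)--(C3). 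Writing $\zeta=(B_1\ I)$, the matching goes as follows.
\begin{itemize}
\item \textbf{(C1).} The condition $\zeta_{t}\mathbb{B}_{2,e}=\beta_{2,e}\zeta_{s}$ has $V$-column $I\xi_2=B_2I$, which is $\partial_{J_2}\mathscr{W}_p=0$. Its $U$-column is $B_1B_2+IJ_3=B_2B_1$, which is $\partial_{B_3}\mathscr{W}_p=0$. Symmetrically, the $3$-version of (C1) is $\partial_{J_3}=0$ together with $\partial_{B_2}=0$.
\item \textbf{(C2).} We have $\zeta\mathbb{I}=I\iota$, so (C2) is $\partial_\alpha=0$.
\item \textbf{(C3).} I decompose the endomorphism of $\mathscr{F}_i=U_i\oplus V_i$ into blocks. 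The $V\!\to\!V$ block is $\mu(p)=0$, which holds automatically. The $V\!\to\!U$ block vanishes identically by lower-triangularity and because $\mathbb{I}$ lands in $V$. The $U\!\to\!U$ block is $\sum\pm[B_2,B_3]$, which is $\partial_{B_1}=0$. The $U\!\to\!V$ block is $\xi_2J_2-J_2B_2+\dots+\iota\alpha$ (up to signs), which is $\partial_{I}=0$.
\end{itemize}
Everything is natural in $Y$, which gives the claimed equivalence of functors.

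\textbf{Main obstacle.} The step needing the most care is Step 2's passage from the critical locus on the stack to the $G$-quotient of the critical locus upstairs for the non-reductive $G$, together with the sign bookkeeping. The free action makes $\mathscr{R}^{\text{st}}\to[\mathscr{R}^{\text{st}}/G]$ a smooth atlas, and critical loci are local in the smooth topology, which should handle the first point. For the second, I would verify the block computations in the $\text{Hilb}_n(\mathbb{C}^2)$ case first to fix the signs attached to $-J_{2,e}$.
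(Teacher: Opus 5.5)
Your proposal is correct and follows essentially the same route as the paper: you read off the data (1)--(5) from the quotient-stack description via $G$-torsors and the block forms in \eqref{eq:groupact}, and you match (C1)--(C3) component by component with the vanishing of the partial derivatives of $\mathscr{W}_p$. Your block computations (the $U$- and $V$-columns of (C1) giving $\partial_{B_3}\mathscr{W}_p=0$ and $\partial_{J_2}\mathscr{W}_p=0$, (C2) giving $\partial_\alpha\mathscr{W}_p=0$, and the $U\!\to\!U$ and $U\!\to\!V$ blocks of (C3) giving $\partial_{B_1}\mathscr{W}_p=0$ and $\partial_I\mathscr{W}_p=0$) agree with the paper's, and your extra step identifying the critical locus on the stack with $[\text{Crit}(\mathscr{W}_p)\cap\mathscr{R}^{\text{st}}/G]$ is left implicit there. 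That step works by smooth-locality for any smooth group action, so it does not actually need the freeness you assert without proof.
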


\begin{proof}
    The data in 1-5 are just from the definition of $M_X(p, d)$ as a quotient stack and the group action \eqref{eq:groupact}, once we recognize
    \begin{equation}
    \begin{split}
        \zeta_i & = \text{pullback of $\begin{pmatrix} B_{1, i} && I_i \end{pmatrix}$} \\ 
        \mathbb{J}_i & = \text{pullback of $\begin{pmatrix} \alpha_i && \jmath_i\end{pmatrix}$} \\
        \beta_{2, 3, e} & = \text{pullback of $B_{2, 3, e}$} \\ 
        \mathbb{B}_{2,3, e} & = \text{pullback of $\begin{pmatrix} B_{2, 3, e} && 0 \\ \pm J_{3, 2, e} && \xi_{2, 3, e} \end{pmatrix}$}.
    \end{split}
    \end{equation}
    The part about the stability condition is likewise obvious. The only nontrivial content of this lemma is that conditions C1-3 are equivalent to the map factoring through the critical locus of $\mathscr{W}_p$. In view of the above, these conditions arise by pullback from the equations (from C1-2)
    \begin{equation}
    \begin{split}
        \begin{pmatrix} B_{1, t(e)} && I_{t(e)}\end{pmatrix} \begin{pmatrix} B_{2, e} && 0 \\ J_{3, e} && \xi_{2, e} \end{pmatrix}&  = B_{2, e} \begin{pmatrix} B_{1, s(e)} && I_{s(e)} \end{pmatrix} \\
        \begin{pmatrix} B_{1, s(e)} && I_{s(e)}\end{pmatrix} \begin{pmatrix} B_{3, e} && 0 \\ -J_{2, e} && \xi_{3, e} \end{pmatrix} & = B_{3, e} \begin{pmatrix} B_{1, t(e)} && I_{t(e)} \end{pmatrix} \\
        \begin{pmatrix} B_{1, i}  && I_i\end{pmatrix} \begin{pmatrix} 0 \\ \iota_i \end{pmatrix} & = 0
    \end{split}
    \end{equation}
    and (from C3)
    \begin{equation}
        \sum_{e \in t^{-1}(i)} \begin{pmatrix} B_{2, e} && 0 \\ J_{3, e} && \xi_{2, e} \end{pmatrix} \begin{pmatrix} B_{3, e} && 0 \\ -J_{2, e} && \xi_{3, e} \end{pmatrix} - \sum_{e \in s^{-1}(i)}  \begin{pmatrix} B_{3, e} && 0 \\ -J_{2, e} && \xi_{3, e} \end{pmatrix} \begin{pmatrix} B_{2, e} && 0 \\ J_{3, e} && \xi_{2, e} \end{pmatrix} + \begin{pmatrix} 0 && 0 \\ \iota_i \alpha_i && \iota_i \jmath_i \end{pmatrix}  = 0.
    \end{equation}
    By inspection of the components, these are equivalent to $d \mathscr{W}_p = 0$ for $\mathscr{W}_p$ given by \eqref{eq:definepotential}; note the data $p = (\xi_2, \xi_3, \iota, \jmath)$ are fixed so we do not differentiate with respect to them. 
\end{proof}

\subsubsection{}
To describe $\Psi$, we then must give the data $(\mathscr{U}, \mathscr{F}, \zeta, \mathbb{J}, \beta, \mathbb{B})$ and show it satisfies the constraints above. Recall $z$ denotes the affine coordinate on $\mathbb{P}^1$, regarded as a global section of $\mathscr{O}_{\mathbb{P}^1}(1)$. $(\mathscr{V}, \mathscr{B}_2, \mathscr{B}_3, \mathscr{I}, \mathscr{J})$ denote the universal quasimap over $\mathsf{QM}^p_d(X) \times \mathbb{P}^1$. 

\begin{prop}
    Notations as above. Setting 
    \begin{equation}
    \begin{split}
        \mathscr{U}_i & = R^1p_*(\mathscr{V}_i \otimes q^* \mathscr{O}_{\mathbb{P}^1}(-1)) \\
        \mathscr{F}_i & = R^1 p_* (\mathscr{V}_i \otimes q^* \mathscr{O}_{\mathbb{P}^1}(-2)) \\ 
        \zeta_i & = \text{$R^1 p_*(-)$ applied to $z: \mathscr{V}_i(-2) \to \mathscr{V}_i(-1)$} \\
        \mathbb{J}_i & = \text{$R^1 p_*(- \otimes q^* \mathscr{O}_{\mathbb{P}^1}(-2))$ applied to $\mathscr{J}_i: \mathscr{V}_i \to W_i \otimes \mathscr{O}_{\mathsf{QM} \times \mathbb{P}^1}$} \\ 
        \beta_{2, 3} & = \text{$R^1 p_*(- \otimes q^* \mathscr{O}_{\mathbb{P}^1}(-1))$ applied to $\mathscr{B}_{2,3} : \mathscr{V} \to \mathscr{V}$} \\ 
        \mathbb{B}_{2, 3} & = \text{$R^1p_*(- \otimes q^* \mathscr{O}_{\mathbb{P}^1}(-2))$ applied to $\mathscr{B}_{2, 3} : \mathscr{V} \to \mathscr{V}$}
    \end{split}
    \end{equation}
    defines a morphism $\Psi : \mathsf{QM}^p_d(X) \to M_X(p, d)$. 
\end{prop}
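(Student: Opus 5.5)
We verify the hypotheses of Lemma \ref{lemma:definemap} for the proposed data on $Y = \mathsf{QM}^p_d(X)$. By Corollary \ref{cor:vanishforQM}, the preceding corollary, and cohomology and base change, $\mathscr{U}_i$ and $\mathscr{F}_i$ are vector bundles of ranks $d_i$ and $d_i + v_i$. Their formation commutes with base change, so all constructions are compatible with pullback along $B \to \mathsf{QM}^p_d(X)$. The short exact sequence in item 2 comes from the sequence $0 \to \mathscr{V}_i(-2) \xrightarrow{s_\infty} \mathscr{V}_i(-1) \to \mathscr{V}_i(-1)|_{\infty} \to 0$, where $s_\infty$ is the section of $\mathscr{O}(1)$ vanishing at $\infty$. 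Taking $R^\bullet p_*$ and using $H^0(\mathscr{V}_i(-1))=0$ fiberwise gives
\[
0 \to p_*(\mathscr{V}_i(-1)|_\infty) \to R^1p_*\mathscr{V}_i(-2) \to R^1 p_*\mathscr{V}_i(-1) \to 0 .
\]
The left term is $V_i \otimes \mathscr{O}_Y$ via the framing at $\infty$. Here I am assuming that the surjection $\mathscr{F}_i \to \mathscr{U}_i$ is induced by $s_\infty$, i.e.\ by ``$1$'', while $\zeta_i$ is induced by $z$ (vanishing at $0$). These two sections of $\mathscr{O}(1)$ are independent, which explains why $\zeta_i$ differs from the surjection.

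Items 3 to 5 are then given by functoriality. Each of $\mathscr{B}_{2,e}$, $\mathscr{B}_{3,e}$, $\mathscr{J}_i$ commutes with multiplication by $s_\infty$, so it induces a morphism of the long exact sequences above. On the subbundle $V\otimes\mathscr{O}_Y$ the induced map is the fiber of the section at $\infty$, which is $\xi_{2,e}$, $\xi_{3,e}$, $\jmath_i$ respectively by the boundary condition $f(\infty)=p$. This gives the commutative ladders in item 5 and the triangle in item 4. For $\mathbb{J}_i$ the target is $R^1p_*(W_i\otimes\mathscr{O}(-2)) \cong W_i\otimes\mathscr{O}_Y$, after the same identification of $H^1(\mathscr{O}(-2))$ with the fiber at $\infty$.

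The conditions C1 to C3 all follow from functoriality of $R^1p_*$ and the relations among sheaf maps on $Y\times\mathbb{P}^1$. For C1, $\mathscr{B}_{2,e}$ is $\mathscr{O}$-linear and so commutes with $z$; applying $R^1p_*$ to the commuting square gives \eqref{eq:critsqaure1}. For C3, apply $R^1p_*(-\otimes\mathscr{O}(-2))$ to the moment map relation $[\mathscr{B}_2,\mathscr{B}_3]+\mathscr{I}\mathscr{J}=0$. The real point is to show that the functorially induced image of $\mathscr{I}_i$ equals $\mathbb{I}_i$, i.e.\ that it is $\iota_i$ followed by the inclusion. This holds because $W_i\otimes\mathscr{O}(-2)$ has $H^1$ concentrated in the ``$\infty$'' piece: the map $W\otimes\mathscr{O}(-1)\to W\otimes\mathscr{O}(-1)|_\infty$ induces $0\to W \xrightarrow{\sim} H^1(W(-2))$, since $H^1(W(-1))=0$. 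Naturality of the connecting map with respect to $\mathscr{I}_i$ then shows the image of $\mathscr{I}_i$ factors through $V_i\otimes\mathscr{O}_Y$ as $\mathscr{I}_i(\infty)=\iota_i$.

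For C2 we need $\zeta_i\mathbb{I}_i=0$, meaning that $z$ kills the image of $H^1(W(-2))\to H^1(\mathscr{V}(-2))$ inside $H^1(\mathscr{V}(-1))$. By naturality this map factors through $z: H^1(W(-2))\to H^1(W(-1))=0$, so it vanishes.

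The main obstacle is stability: we must show $\mathbb{C}[\zeta|_{U}]\,(\text{image of }V)$ generates $\mathscr{U}_i$ fiberwise, in matrix terms $\mathbb{C}[B_1]I(V)=U$. I would argue by Serre duality. $\mathscr{U}_i^\vee \cong H^0(\mathscr{V}_i^\vee(-1))$, and $H^0(\mathscr{V}_i^\vee)$ carries multiplication by $z$ with $B_1$ dual to it, while $I$ is dual to evaluation at $\infty$. A proper $B_1$-invariant subspace containing $I(V)$ would then dualize to a nonzero $z$-stable subspace of sections of $\mathscr{V}_i^\vee(-1)$ vanishing at $\infty$, i.e.\ a nonzero $\mathbb{C}[z]$-submodule killed by evaluation at $\infty$. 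Such a submodule is impossible, since a nonzero section of a bundle of degree $\geq0$ twisted down times arbitrary polynomials in $z$ cannot all vanish at $\infty$. Alternatively, using Lemma \ref{lemma:canonicalsplit}, $\mathscr{U}_i$ is identified with $H^0$ of the torsion quotient $T_i=V_i\otimes\mathscr{O}/\mathscr{V}_i$, twisted. There $B_1$ is multiplication by $z$ on a finite $\mathbb{C}[z]$-module that is a quotient of $V_i\otimes\mathbb{C}[z]$, so it is cyclic over the image of $V_i$. This second route seems cleanest, and I would pursue it first.
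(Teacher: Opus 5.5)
Your argument is correct, but it is organized differently from the paper's, and the stability step uses a different idea.

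\textbf{The paper's route.} The paper derives everything from the Beilinson sequence $0\to\mathscr{V}_i\to\mathscr{F}_i\boxtimes\mathscr{O}_{\mathbb{P}^1}\xrightarrow{\zeta_i-z\pi_i}\mathscr{U}_i\boxtimes\mathscr{O}_{\mathbb{P}^1}(1)\to0$:
its fiber at $\infty$ gives the extension and the ladders; its fiber at $z=0$ gives C1; C2 holds because $\mathbb{I}_i$ factors through $\ker(\zeta_i-z\pi_i)=\mathscr{V}_i$; and stability is immediate from Lemma \ref{lemma:stability}, since the sequence shows $\zeta_i-z\pi_i$ is fiberwise surjective.

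\textbf{Your route.} You use only the $s_\infty$-twisting sequence and naturality of its connecting map. This makes $R^1p_*(\mathscr{I}_i(-2))=\mathbb{I}_i$ and $\zeta_i\mathbb{I}_i=0$ (via $H^1(W_i\otimes\mathscr{O}(-1))=0$) more explicit than the paper does. For stability you pass to the torsion quotient $T_i$ of Lemma \ref{lemma:canonicalsplit} instead of using Lemma \ref{lemma:stability}.

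That route works, but the phrase ``there $B_1$ is multiplication by $z$'' hides the one real check. By naturality of the connecting maps of $0\to\mathscr{V}_i(-k)\to V_i\otimes\mathscr{O}(-k)\to T_i(-k)\to0$ under $s_\infty$ and $z$:
\begin{enumerate}
\item The image of $H^0(T_i(-2))$ in $\mathscr{F}_i$ is a complement to $V_i$, carried isomorphically onto $\mathscr{U}_i\cong H^0(T_i(-1))$ by $\pi_i$.
\item In that splitting, $B_{1,i}$ is multiplication by $z$ on $H^0(\mathbb{A}^1,T_i)$.
\item A direct cocycle computation near $\infty$ shows that $I_i(V_i)$ is the image of the constant sections $V_i\otimes1$.
\end{enumerate}
Since $\mathbb{C}[B_{1,i}]I_i(V_i)$ does not depend on the choice of splitting, surjectivity of $V_i\otimes\mathbb{C}[z]\to H^0(\mathbb{A}^1,T_i)$ then finishes the argument. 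These identifications do hold; the splitting is precisely the gauge of Proposition \ref{prop:gaugefixunipotent}.

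\textbf{Comparison.} Your route gives the concrete model $U_i=H^0(T_i)$, $B_{1,i}=z$. The paper's route is shorter, because Lemma \ref{lemma:stability} is needed anyway to construct $\Phi$.

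Your Serre-duality sketch is imprecise as written, since $z$ is not an endomorphism of $H^0(\mathscr{V}_i^\vee)$. Stated correctly, it is the dual of the paper's argument: $(\zeta_i-\lambda\pi_i)^\vee\phi=(z-\lambda s_\infty)\phi$ in $H^0(\mathscr{V}_i^\vee)$, which is nonzero whenever $\phi\neq0$.
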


\begin{proof}
    We just need to check that the criteria in Lemma \ref{lemma:definemap} are satisfied. First is to verify that $\mathscr{F}_i$ and $\mathscr{U}_i$ fit into an exact sequence of the desired form. By Corollary \ref{cor:vanishforQM} and (the argument leading to) Lemma \ref{mainlemma}, we conclude there is an exact sequence 
    \begin{equation}
    \begin{tikzcd}
        0 \arrow[r] & \mathscr{V}_i \arrow[r] & \mathscr{F}_i \boxtimes \mathscr{O}_{\mathbb{P}^1} \arrow[r, "\zeta_i - z \pi _i"] & \mathscr{U}_i \boxtimes \mathscr{O}_{\mathbb{P}^1}(1) \arrow[r] & 0 
    \end{tikzcd}
    \end{equation}
    of locally free sheaves on $\mathsf{QM}^p_d(X) \times \mathbb{P}^1$. $\pi_i$ denotes $R^1p_*(-)$ applied to the tautological inclusion $\mathscr{V}_i(-2) \hookrightarrow \mathscr{V}_i(-1)$. Taking the fiber over $\mathsf{QM}^p_d(X) \times \infty$ and using the identification $\mathscr{V}_i |_{\mathsf{QM} \times \infty} \simeq V_i \otimes \mathscr{O}_{\mathsf{QM}}$ gives an exact sequence
    \begin{equation}
    \begin{tikzcd}
        0 \arrow[r] & V_i \otimes \mathscr{O}_{\mathsf{QM}^p_d(X)} \arrow[r] & \mathscr{F}_i \arrow[r, "-\pi_i"] & \mathscr{U}_i \arrow[r] & 0 
    \end{tikzcd}
    \end{equation}
    of vector bundles over $\mathsf{QM}^p_d(X)$. 

    Applying (the argument leading to) Lemma \ref{mainlemma} to the morphism $\mathscr{J}_i : \mathscr{V}_i \to W_i \otimes \mathscr{O}_{\mathsf{QM} \times \mathbb{P}^1}$ we find it fits into a diagram
    \begin{equation}
    \begin{tikzcd}
        \mathscr{V}_i \arrow[r, hook] \arrow{rd}[swap]{\mathscr{J}_i} & \mathscr{F}_i \boxtimes \mathscr{O}_{\mathbb{P}^1} \arrow[d, "\mathbb{J}_i"] \\ 
        & W_i \otimes \mathscr{O}_{\mathsf{QM} \times \mathbb{P}^1}
    \end{tikzcd}
    \end{equation}
    Taking the fiber over $\mathsf{QM}^p_d(X) \times \infty$ and using that $\mathscr{J}_i(\infty) = \jmath_i$ gives the commutative diagram required for $\mathbb{J}_i$. 

    Applying (the argument leading to) Lemma \ref{mainlemma} to the morphism $\mathscr{B}_{2, e} : \mathscr{V}_{s(e)} \to \mathscr{V}_{t(e)}$, we find a diagram 
    \begin{equation}
    \begin{tikzcd}
        0 \arrow[r] & \mathscr{V}_{s(e)} \arrow[r] \arrow[d, "\mathscr{B}_{2, e}"] & \mathscr{F}_{s(e)} \boxtimes \mathscr{O}_{\mathbb{P}^1} \arrow[r, "\zeta - z \pi"] \arrow[d, "\mathbb{B}_{2, e}"] & \mathscr{U}_{s(e)} \boxtimes \mathscr{O}_{\mathbb{P}^1}(1) \arrow[r] \arrow[d, "\beta_{2, e}"] & 0 \\ 
        0 \arrow[r] & \mathscr{V}_{t(e)} \arrow[r] & \mathscr{F}_{t(e)} \boxtimes \mathscr{O}_{\mathbb{P}^1} \arrow[r, "\zeta - z \pi"] & \mathscr{U}_{t(e)} \boxtimes \mathscr{O}_{\mathbb{P}^1}(1) \arrow[r] & 0
    \end{tikzcd}
    \end{equation}
    with exact rows and such that the squares commute. We suppressed the subscripts on $\zeta$ and $\pi$ to reduce clutter. Taking the fiber over $\mathsf{QM}^p_d(X) \times \infty$ gives the required commutative diagram for the $(\beta_2, \mathbb{B}_2)$ data. The argument for $\mathscr{B}_3$ is nearly identical. Taking the fiber at $z = 0$ gives the commuting squares required by condition C1 of Lemma \ref{lemma:definemap}. 

    Applying (the argument leading to) Lemma \ref{mainlemma} to the morphism $\mathscr{I}_i: W_i \otimes \mathscr{O}_{\mathsf{QM} \times \mathbb{P}^1} \to \mathscr{V}_i$, we find it fits into a diagram 
    \begin{equation}
    \begin{tikzcd}
        \mathscr{V}_i \arrow[r, hook] & \mathscr{F}_i \boxtimes \mathscr{O}_{\mathbb{P}^1} \\ 
        & W_i \otimes \mathscr{O}_{\mathsf{QM} \times \mathbb{P}^1} \arrow{u}[swap]{\mathbb{I}_i} \arrow[lu, "\mathscr{I}_i"]
    \end{tikzcd}
    \end{equation}
    with $\mathbb{I}_i$ defined as in Lemma \ref{lemma:definemap}. Observing that the vertical arrow factors through the kernel of $\zeta_i - z \pi _i$, we see it satisfies the constraint C2 of Lemma \ref{lemma:definemap}. 

    Finally, observe that since the quasimap data satisfy the moment map conditions $\comm{\mathscr{B}_2}{\mathscr{B}_3} + \mathscr{I} \mathscr{J} = 0$, an application of Lemma \ref{mainlemma} shows that condition C3 of Lemma \ref{lemma:definemap} is satisfied. 

    That the quiver data described satisfy the stability condition is a consequence of Lemma \ref{lemma:stability} below. 
\end{proof}

The following observation completes the proof, and will also be useful in the next section. 

\begin{lemma} \label{lemma:stability}
    Let $Y$ be an algebraic stack with data $(\mathscr{U}, \mathscr{F}, \zeta)$ listed in 1-3 of Lemma \ref{lemma:definemap}. Let $\pi_i : \mathscr{F}_i \twoheadrightarrow \mathscr{U}_i$ be the projection. Define a sheaf on $Y \times \mathbb{P}^1$ by 
    \begin{equation}
        \mathscr{V}_i = \ker(\zeta_i - z \pi_i). 
    \end{equation}
    Then $\mathscr{V}_i$ is a vector bundle if and only if the quiver data in $\zeta_i$ satisfy the stability condition pointwise on $Y$.
\end{lemma}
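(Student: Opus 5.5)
The plan is to reduce the statement to a pointwise linear-algebra criterion, the Hautus (PBH) test, and then to globalize it using the standard behaviour of kernels of maps of vector bundles. The key observation is that stability is exactly surjectivity of the map
\begin{equation*}
\zeta_i - z\pi_i : \mathscr{F}_i \boxtimes \mathscr{O}_{\mathbb{P}^1} \to \mathscr{U}_i \boxtimes \mathscr{O}_{\mathbb{P}^1}(1)
\end{equation*}
of vector bundles on $Y \times \mathbb{P}^1$. The whole argument then rests on the fact that the kernel of a surjection of vector bundles is a vector bundle whose formation commutes with base change.

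First I would do the pointwise analysis. Fix a geometric point $y \in Y$ and split the short exact sequence in item 2 of Lemma \ref{lemma:definemap}, so that $\mathscr{F}_i|_y = U_i \oplus V_i$, $\pi_i = (1 \ 0)$ and $\zeta_i = (B_{1,i} \ I_i)$. In the fibre over $z \in \mathbb{A}^1$ the map becomes $(u,v) \mapsto (B_{1,i} - z)u + I_i v$. In the fibre over $\infty$, after trivializing $\mathscr{O}(1)$ there, it becomes $-\pi_i$, which is surjective. So surjectivity fails exactly when, for some $z$, there is a nonzero covector $\lambda \in U_i^*$ with $\lambda(B_{1,i} - z) = 0$ and $\lambda I_i = 0$.

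I would then show that such a $\lambda$ exists if and only if $\mathbb{C}[B_{1,i}]I_i(V_i) \neq U_i$.
\begin{itemize}
\item If such a $\lambda$ exists, then $\lambda$ annihilates $B_{1,i}^k I_i(V_i)$ for all $k$, so $\mathbb{C}[B_{1,i}]I_i(V_i)$ is a proper subspace.
\item Conversely, if $S = \mathbb{C}[B_{1,i}]I_i(V_i)$ is proper, then $S$ is $B_{1,i}$-invariant. The induced endomorphism of $(U_i/S)^*$ has an eigenvector $\lambda$, which is the required covector.
\end{itemize}
This is the Hautus controllability test. It shows that stability at $y$ is equivalent to $\zeta_i - z\pi_i$ being surjective on $y \times \mathbb{P}^1$. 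By Nakayama's lemma, that is in turn equivalent to surjectivity on a neighbourhood of $y \times \mathbb{P}^1$, using properness of $\mathbb{P}^1$ to pass from the fibre to a neighbourhood.

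For the direction "stable $\Rightarrow$ vector bundle": if stability holds at every point, the map is a surjection of locally free sheaves. Its kernel $\mathscr{V}_i$ is then locally free of rank $(d_i + v_i) - d_i = v_i$, and its formation commutes with arbitrary base change. Consequently $\mathscr{V}_i|_{y \times \mathbb{P}^1}$ has degree $-d_i$, as expected.

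For the converse, suppose stability fails at $y$. Then over $y \times \mathbb{P}^1$ the map has a nonzero torsion cokernel $C$, supported at the finitely many $z$ that are eigenvalues of $B_{1,i}$ on $U_i/S$.

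I expect this converse to be the main obstacle, because on a curve the kernel of any map of bundles is automatically locally free. So the failure must be detected through the family, or through the intended meaning of ``vector bundle'', namely a locally free sheaf whose formation commutes with base change, so that $0 \to \mathscr{V}_i \to \mathscr{F}_i \to \mathscr{U}_i(1) \to 0$ stays exact on fibres. My first attempt would use flatness. Suppose $\mathscr{V}_i$ were locally free, hence flat over $Y$. Then the image $\mathscr{F}_i/\mathscr{V}_i$ would be flat, being an extension-free quotient with locally free source and flat kernel. Hence $\operatorname{Tor}_1$ against $\mathscr{O}_y$ would vanish, so $\mathscr{V}_i|_y$ would inject into the fibre of $\mathscr{F}_i$ and equal the kernel on $y \times \mathbb{P}^1$. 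Comparing Euler characteristics, equivalently degrees, with a nearby stable point, or with the locally constant rank of $R^1p_*$, would then give a contradiction. The contradiction comes from $\deg \ker = -d_i + \operatorname{length}(C) \neq -d_i$.

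If the family-level argument becomes delicate on a non-reduced $Y$, I would fall back on the reading that the lemma is really a statement about $\mathscr{V}_i$ being a bundle sitting in that short exact sequence. Under that reading the pointwise surjectivity criterion above already proves both directions.
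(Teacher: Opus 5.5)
\paragraph{Where the argument goes wrong.} Your pointwise analysis is exactly the paper's argument. This covers the fibre map $(u,v)\mapsto (B_{1,i}-z)u+I_iv$, surjectivity at $\infty$, and the eigenvector (Hautus) test. The direction ``stable $\Rightarrow$ $\mathscr{V}_i$ is the kernel of a surjection of bundles, hence a bundle commuting with base change'' also matches the paper. The step that fails is your flatness attempt at the literal converse.

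The reason you give for flatness of the image is not valid in general: a quotient of a flat sheaf by a flat subsheaf need not be flat. The injectivity $\mathscr{V}_i|_y\hookrightarrow\mathscr{F}_i|_y$ does hold here, because the map is injective near $\infty$ and $\mathscr{V}_i|_y$ is torsion-free. The real break is the next clause. To identify $\mathscr{V}_i|_y$ with the kernel on $y\times\mathbb{P}^1$, you would also need $\operatorname{Tor}_1^{\mathscr{O}_Y}(C,k(y))=0$, where $C$ is the cokernel of $\zeta_i-z\pi_i$ on $Y\times\mathbb{P}^1$. Nothing guarantees this. Without it there is no degree contradiction: $\mathscr{V}_i|_y$ can sit strictly inside the fibre kernel, with degree $-d_i$ just as at nearby stable points.

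\paragraph{The literal converse is false.} This fails not only for $Y$ a point, as you observed, but also in families with stable points nearby. Take $d_i=v_i=1$, $Y=\mathbb{A}^1_t$, $B_{1,i}=0$ and $I_i=t$.
\begin{itemize}
\item On $\mathbb{A}^1_t\times\mathbb{A}^1_z$ the map is $(u,v)\mapsto -zu+tv$. Its kernel is free on $(t,z)$, since $t,z$ is a regular sequence.
\item Near $\infty$ the map is surjective. Hence $\mathscr{V}_i$ is a line bundle on $Y\times\mathbb{P}^1$.
\item Stability nevertheless fails at $t=0$.
\item There, $\mathscr{V}_i|_{t=0}$ is spanned on the affine chart by $(0,z)$. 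So it is $\mathscr{O}(-1)\subset\mathscr{O}$ inside the second summand.
\item The fibre kernel is the whole second summand $\mathscr{O}$, of degree $0$. This is exactly the failure of the identification above.
\end{itemize}

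\paragraph{What to keep.} Discard the first attempt and take your fallback as the statement: $\zeta_i-z\pi_i$ is surjective if and only if stability holds pointwise. Surjectivity here is equivalent to exactness of $0\to\mathscr{V}_i\to\mathscr{F}_i\boxtimes\mathscr{O}_{\mathbb{P}^1}\to\mathscr{U}_i\boxtimes\mathscr{O}_{\mathbb{P}^1}(1)\to 0$. It is also equivalent to $\mathscr{V}_i$ being a subbundle whose formation commutes with base change.

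With that reading your proof is the paper's proof. The paper's opening sentence, ``vector bundle iff fiberwise surjective'', silently makes the same identification. This is also the form the paper actually uses: in constructing $\Psi$, stability is deduced from exactness of the Beilinson sequence, that is, from surjectivity.
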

This is essentially shown in \cite{nakajima2011hansaw}, but we reproduce the argument for convenience of the reader. 

\begin{proof}
    $\mathscr{V}_i$ is a vector bundle if and only if $\zeta_i - z \pi_i$ is fiberwise surjective at all geometric points of $Y \times \mathbb{P}^1$. Restricting to the fiber at any point, this reduces to a question of linear algebra and we may write $\zeta_i = \begin{pmatrix} B_{1, i} && I_i \end{pmatrix}$ and $\pi_i = \begin{pmatrix} 1 && 0  \end{pmatrix}$ as matrices. 

    First we show the stability condition implies the surjectivity. Assume $B_{1, i}$ and $I_i$ satisfy $\mathbb{C}[B_{1, i}] I_i(V_i) = U_i$. The image of $\zeta_i - z \pi _i$ obviously contains the image of $I_i$ by the matrix formulas above. Moreover the image is $B_{1, i}$-invariant: for any vectors $\psi$ and $\chi$ we have 
    \begin{equation}
        B_{1, i}( (B_{1, i} - z) \psi + I_i \chi) = (B_{1, i} - z)(B_{1, i} \psi + I_i \chi) + z I_i \chi = (\zeta_i - z \pi_i) \begin{pmatrix} B_{1, i} \psi + I_i \chi  \\ z\chi \end{pmatrix}. 
    \end{equation}
    The stability condition then says that the image must be all of $U_i$, establishing the fiberwise surjectivity. 

    Going in the other direction, assume $\zeta_i - z \pi_i$ is surjective in the fiber at every point. Let $S_i \subseteq U_i$ be any subspace invariant under $B_{1, i}$ and containing the image of $I_i$, and let $\text{Ann}(S_i) \subseteq U_i^*$ be its annihilator. The annihilator is also $B_{1, i}$-invariant, so if it is nonzero then it contains a nonzero eigenvector for $B_{1, i}$, call it $\phi$. Since $\phi \in \text{Ann}(S_i)$, it must annihilate $I_i$, so it satisfies $\phi(B_{1, i} - \lambda) = 0$ for some $\lambda \in \mathbb{C}$ and $\phi I_i = 0$. Therefore $\phi$ annihilates the image of $\zeta_i - \lambda \pi_i$ for some $\lambda$. By assumption, this map is surjective for any $\lambda$ so $\phi = 0$, thus $\text{Ann}(S_i) = 0$ and $S_i = U_i$. 
\end{proof}

\subsection{From quiver to quasimaps} \label{quiver2quasi}
Now we show that $\Psi$ is our sought-after isomorphism by constructing its inverse $\Phi: M_X(p, d) \to \mathsf{QM}^p_d(X)$. This morphism is equivalent to an $M_X(p, d)$-family of quasimaps $(\mathscr{V}, \mathscr{B}_2, \mathscr{B}_3, \mathscr{I}, \mathscr{J})$. 

\subsubsection{}
Recall the universal bundles $U_i$, $F_i$ over $M_X(p, d)$ discussed in Section \ref{sect:quivermoduli}. By abuse of notation we write the universal morphims between them by the same symbols as the quiver data, e.g. $B_{2, e} : U_{s(e)} \to U_{t(e)}$ is a morphism of vector bundles over $M_X(p, d)$. 

For $i \in Q_0$, define a vector bundle over $M_X(p, d) \times \mathbb{P}^1$ by 
\begin{equation}
    \mathscr{V}_i := \ker \begin{pmatrix} B_{1, i} - z && I_i \end{pmatrix}.
\end{equation}
This is indeed a vector bundle by Lemma \ref{lemma:stability}. By construction, it fits into an exact sequence 
\begin{equation}
\begin{tikzcd}
    0 \arrow[r] & \mathscr{V}_i \arrow[r] & F_i \boxtimes \mathscr{O}_{\mathbb{P}^1} \arrow[r] & U_i \boxtimes \mathscr{O}_{\mathbb{P}^1}(1) \arrow[r] & 0. 
\end{tikzcd}
\end{equation}

\subsubsection{}
It remains to define the sections $(\mathscr{B}_2, \mathscr{B}_3, \mathscr{I}, \mathscr{J})$. Start with $\mathscr{B}_2$. As usual from \eqref{eq:groupact} we have a tautological morphism $\mathbb{B}_{2, e} : F_{s(e)} \to F_{t(e)}$, given by 
\begin{equation}
    \mathbb{B}_{2, e} = \begin{pmatrix} B_{2, e} && 0 \\ J_{3, e} && \xi_{2, e} \end{pmatrix}.
\end{equation}
The claim is that when pulled back to $M_X(p, d) \times \mathbb{P}^1$ and restricted to $\mathscr{V}_{s(e)} \hookrightarrow F_{s(e)} \boxtimes \mathscr{O}_{\mathbb{P}^1}$, it factors through $\mathscr{V}_{t(e)} \hookrightarrow F_{t(e)} \boxtimes \mathscr{O}_{\mathbb{P}^1}$. To see this it suffices to observe the following
\begin{equation}
    B_{2, e} \begin{pmatrix} B_{1, s(e)} - z && I_{s(e)} \end{pmatrix} = \begin{pmatrix} B_{1, t(e)} - z && I_{t(e)} \end{pmatrix} \begin{pmatrix} B_{2, e} && 0 \\ J_{3, e} && \xi_{2, e} \end{pmatrix}
\end{equation}
which follows from the equations 
\begin{equation}
\begin{split}
    \pdv{\mathscr{W}_p}{B_3} = \comm{B_1}{B_2} + I J_3 & = 0 \\
    \pdv{\mathscr{W}_p}{J_2} = B_2 I - I \xi_2 & = 0
\end{split}
\end{equation}
arising from $d \mathscr{W}_p = 0$. So we see it makes sense to define a morphism $\mathscr{B}_{2, e}: \mathscr{V}_{s(e)} \to \mathscr{V}_{t(e)}$ by 
\begin{equation}
\begin{tikzcd}
    \mathscr{V}_{s(e)} \arrow[d, "\mathscr{B}_{2, e}"] \arrow[r, hook] & F_{s(e)} \boxtimes \mathscr{O}_{\mathbb{P}^1} \arrow[d, "\mathbb{B}_{2, e}"] \\ 
    \mathscr{V}_{t(e)} \arrow[r, hook] & F_{t(e)} \boxtimes \mathscr{O}_{\mathbb{P}^1}. 
\end{tikzcd}
\end{equation}
In a similar fashion we see that the morphism $\mathscr{B}_{3, e}$ defined by 
\begin{equation}
\begin{tikzcd}
    \mathscr{V}_{t(e)} \arrow[d, "\mathscr{B}_{3, e}"] \arrow[r, hook] & F_{t(e)} \boxtimes \mathscr{O}_{\mathbb{P}^1} \arrow[d, "\mathbb{B}_{3, e}"] \\ 
    \mathscr{V}_{s(e)} \arrow[r, hook] & F_{s(e)} \boxtimes \mathscr{O}_{\mathbb{P}^1}
\end{tikzcd}
\end{equation}
is well-defined by the equations $\pdv{\mathscr{W}_p}{B_2} = \pdv{\mathscr{W}_p}{J_3} = 0$. 

\subsubsection{}
It remains to define the sections $\mathscr{I}$, $\mathscr{J}$. From \eqref{eq:groupact} we have tautological morphisms $\begin{pmatrix} \alpha_i && \jmath_i\end{pmatrix}: F_i \to W_i \otimes \mathscr{O}_{M_X(p, d)}$, so we may define morphisms $\mathscr{J}_i$ by the diagrams
\begin{equation}
 \begin{tikzcd}
        \mathscr{V}_i \arrow[r, hook] \arrow{rd}[swap]{\mathscr{J}_i} & F_i \boxtimes \mathscr{O}_{\mathbb{P}^1} \arrow[d, "(\alpha_i \, \, \, \jmath_i  )"] \\ 
        & W_i \otimes \mathscr{O}_{M_X \times \mathbb{P}^1}.
    \end{tikzcd}
\end{equation}
There are morphisms $\mathscr{I}_i$ 
\begin{equation}
\begin{tikzcd}
    \mathscr{V}_i \arrow[r, hook] & F_i \boxtimes \mathscr{O}_{\mathbb{P}^1} \\ & W_i \otimes \mathscr{O}_{M_X \times \mathbb{P}^1} \arrow{u}[swap]{\big( \begin{smallmatrix} 0 \\ \iota_i \end{smallmatrix} \big)} \arrow[lu, "\mathscr{I}_i"] 
\end{tikzcd}
\end{equation}
which are indeed well-defined by the equation 
\begin{equation}
    \begin{pmatrix} B_{1, i} - z&& I_i \end{pmatrix} \begin{pmatrix} 0 \\ \iota_i\end{pmatrix} = 0 
\end{equation}
which follows from 
\begin{equation}
    \pdv{\mathscr{W}_p}{\alpha} = I \iota = 0. 
\end{equation}

\subsubsection{}
So far we have given the bundles $\mathscr{V}$ and sections $(\mathscr{B}_2, \mathscr{B}_3, \mathscr{I}, \mathscr{J})$ over $M_X(p, d) \times \mathbb{P}^1$. Now observe that the sections satisfy the moment map equations $\comm{\mathscr{B}_2}{\mathscr{B}_3} + \mathscr{I} \mathscr{J} = 0$ as a result of the equations 
\begin{equation}
    \comm{\mathbb{B}_2}{\mathbb{B}_3} + \begin{pmatrix} 0 \\ \iota_i \end{pmatrix} \begin{pmatrix} \alpha_i && \jmath_i \end{pmatrix} = 0
\end{equation}
which in turn follow from the equations 
\begin{equation}
\begin{split}
    \pdv{\mathscr{W}_p}{B_1} = \comm{B_2}{B_3} & = 0 \\ 
    \pdv{\mathscr{W}_p}{I} = J_2 B_2 - \xi_2 J_2 + J_3 B_3 - \xi_3 J_3 + \iota \alpha & = 0. 
\end{split}
\end{equation}
Moreover, by construction of $\mathscr{V}_i$, we have a natural identification $\mathscr{V}_i |_{M_X \times \infty} \simeq V_i \otimes \mathscr{O}_{M_X}$ with respect to which $(\mathscr{B}_2(\infty), \mathscr{B}_3(\infty), \mathscr{I}(\infty), \mathscr{J}(\infty)) = (\xi_2, \xi_3, \iota, \jmath) = p$. 

\subsubsection{}
The discussion of the past four subsections may be summarized as 
\begin{prop}
Let $(\mathscr{V}, \mathscr{B}_2, \mathscr{B}_3, \mathscr{I}, \mathscr{J})$ denote the vector bundles and sections over $M_X(p, d) \times \mathbb{P}^1$ defined above. These describe a family of quasimaps of degree $d$ with $f(\infty) = p$ and therefore a morphism $\Phi: M_X(p, d) \to \mathsf{QM}^p_d(X)$. 
\end{prop}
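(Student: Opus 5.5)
The plan is to verify, one by one, the conditions in the definition of a family of quasimaps from Section \ref{sect:qmapdata}. The work is in assembling what has already been established and checking rank, degree and the framing at $\infty$. I would first carry out the construction on the atlas $\operatorname{Crit}(\mathscr{W}_p)\cap\mathscr{R}(\vb{v},\vb{w};d)^{\text{st}}$, where $U_i$ and $F_i = U_i\oplus V_i$ are trivial and all the data are explicit matrices. Afterwards I would check $GL(U)\ltimes\text{Hom}(U,V)$-equivariance so that everything descends to $M_X(p,d)$.

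\emph{Rank and degree.} By Lemma \ref{lemma:stability}, the map $\zeta_i - z\pi_i = \begin{pmatrix} B_{1,i}-z && I_i\end{pmatrix}$ is fiberwise surjective at every geometric point of $M_X(p,d)\times\mathbb{P}^1$. Hence $\mathscr{V}_i$ is locally free, and its formation commutes with arbitrary base change, since it is the kernel of a surjection of vector bundles. From the exact sequence $0\to\mathscr{V}_i\to F_i\boxtimes\mathscr{O}\to U_i\boxtimes\mathscr{O}(1)\to 0$ we read off
\begin{equation*}
\operatorname{rk}\mathscr{V}_i = (d_i+v_i)-d_i = v_i, \qquad \deg\mathscr{V}_i|_{b\times\mathbb{P}^1} = 0 - d_i .
\end{equation*}
Thus the degree function from the preliminaries takes the value $d$ on this family.

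\emph{Framing at $\infty$.} In a local trivialization of $\mathscr{O}(1)$ near $\infty$, with coordinate $u=1/z$, the defining map becomes $u(B_{1,i}-z)\oplus uI_i = (uB_{1,i}-1)\oplus uI_i$. At $u=0$ this is $-\pi_i$. Hence $\mathscr{V}_i|_{M_X\times\infty} = \ker\pi_i = V_i\otimes\mathscr{O}_{M_X}$, embedded as the second summand of $F_i$. Restricting the block matrices $\mathbb{B}_{2,e}$, $\mathbb{B}_{3,e}$, $(\alpha_i\ \jmath_i)$ and $\big(\begin{smallmatrix}0\\ \iota_i\end{smallmatrix}\big)$ to this summand, and projecting to it where necessary, gives exactly $\xi_{2,e},\xi_{3,e},\jmath_i,\iota_i$. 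So $f(\infty)=p$. As noted in Section \ref{sect:qmapdata}, stability at the generic point then follows automatically.

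\emph{Well-definedness and descent.} The factorization of $\mathbb{B}_{2,3}$ and of $\big(\begin{smallmatrix}0\\ \iota\end{smallmatrix}\big)$ through the $\mathscr{V}$'s, and the moment map equation, were derived above from the components of $d\mathscr{W}_p=0$. The remaining point, which I expect to be the only real (though mild) obstacle, is descent to the quotient stack. An element $(g,x)$ acts on $F_i=U_i\oplus V_i$ by the block matrix $\begin{pmatrix} g_i && 0\\ x_i && 1\end{pmatrix}$. By \eqref{eq:groupact} this intertwines the maps $\begin{pmatrix}B_{1,i}-z && I_i\end{pmatrix}$, the $\mathbb{B}$'s and $(\alpha\ \jmath)$ for the two representations, and it fixes the $V_i$ summand pointwise. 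It therefore induces isomorphisms of the resulting quasimap families which are the identity at $\infty$. The cocycle condition is immediate, since these isomorphisms come from a group action. Since quasimaps in $\mathsf{QM}^p(X)$ have no nontrivial automorphisms, the equivariant family on the atlas descends to a family over $M_X(p,d)$. This is the same as a morphism $\Phi: M_X(p,d)\to\mathsf{QM}^p_d(X)$.
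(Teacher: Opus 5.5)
Your proposal is correct and follows the paper's argument. In both, $\mathscr{V}_i=\ker\begin{pmatrix} B_{1,i}-z & I_i\end{pmatrix}$ is a rank-$v_i$ bundle by Lemma \ref{lemma:stability}, the sections come from factoring the block matrices through these kernels using $d\mathscr{W}_p=0$, and the fiber at $\infty$ is the $V_i$ summand, which recovers $p$. The differences are only in presentation: the paper works directly with the universal bundles $U_i, F_i$ on the quotient stack, so descent is built in, and it leaves the degree count implicit; you instead construct everything on the atlas, descend via $GL(U)\ltimes\text{Hom}(U,V)$-equivariance, and read off $\deg\mathscr{V}_i=-d_i$ explicitly.
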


It is a routine verification using Lemma \ref{mainlemma} to show that $\Phi \circ \Psi$ sends a family of quasimaps to an isomorphic family, and that the isomorphism between them is unique. A similar statement holds for $\Psi \circ \Phi$ applied to quiver representations. This concludes the proof of Theorem \ref{thm:quasimapcrit}. 

\subsection{Splitting the universal extension} \label{sect:splittings}
Theorem \ref{thm:quasimapcrit} gives an explicit description of the quasimap moduli via a quiver and potential. The only aspect of this description which is somewhat nonstandard is the need to quotient by the non-reductive group $GL(U) \ltimes \text{Hom}(U, V)$. It will be explained in this section that one can in fact get an equivalent description using a quotient by $GL(U)$, at the expense of making further choices depending on the evaluation point $p$ at $\infty$ of $\mathbb{P}^1$ in a non-canonical way. 

\begin{prop} \label{prop:gaugefixunipotent}
    Each choice of basis in $V$ constructed in the proof of Lemma \ref{lemma:canonicalsplit} defines a global section of the affine bundle
    \begin{equation}
    \begin{tikzcd}
        {[ (\textnormal{Crit}(\mathscr{W}_p) \cap \mathscr{R}(\vb{v}, \vb{w}; d)^{\text{st}})/GL(U) ]} \arrow[r] & M_X(p, d).
    \end{tikzcd}
    \end{equation}
\end{prop}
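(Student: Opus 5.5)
The map $[(\textnormal{Crit}(\mathscr{W}_p)\cap\mathscr{R}^{\text{st}})/GL(U)] \to M_X(p,d)$ is the quotient by the residual free action of the vector group $\text{Hom}(U,V)$, twisted by $GL(U)$. It is therefore an affine bundle, modeled on the vector bundle $\bigoplus_i \text{Hom}(U_i,V_i)$ over $M_X(p,d)$. Equivalently, it is the bundle of splittings of the universal extensions $0\to V_i\otimes\mathscr{O}\to F_i\to U_i\to 0$. To see this, note that a representative in $\mathscr{R}$ over a point of $M_X(p,d)$, taken modulo $GL(U)$ only, amounts to an identification $F_i\simeq U_i\oplus V_i$ compatible with the extension. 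The unipotent part $x_i$ shifts the splitting. So a global section is the same as a choice of retraction $r_i : F_i \to V_i\otimes\mathscr{O}_{M_X(p,d)}$ with $r_i|_{V_i}=1$ for every $i$, canonical given the chosen basis. The plan is to build $r_i$ from the basis $\sigma^{ia}$, $\tau_{ia}$ of Lemma \ref{lemma:canonicalsplit}, imitating the map $\sum_a \tau_{ia}\otimes\Sigma^{ia}$ at the level of the quiver data.

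Concretely, each $\sigma^{ia} = \jmath_{m\alpha}\xi_\gamma$ is a word in $\jmath$ and the $\xi_{2,3}$. Replace each letter by its enlarged counterpart: $\jmath_m \mapsto \mathbb{J}_m = (\alpha_m\ \jmath_m)$ and $\xi_{2,3,e}\mapsto \mathbb{B}_{2,3,e}$. This yields $\widetilde\Sigma^{ia} := \mathbb{J}_{m\alpha}\mathbb{B}_\gamma : F_i \to \mathscr{O}$. Then set $r_i := \sum_a \tau_{ia}\otimes \widetilde\Sigma^{ia}$. Three things must be checked, in this order.

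First, $r_i$ restricted to $V_i$ equals $\sum_a\tau_{ia}\sigma^{ia}=1$. This uses the block lower-triangular form of $\mathbb{B}_{2,3}$ in \eqref{eq:groupact} and the compatibility of $\mathbb{J}$ with $\jmath$ in Lemma \ref{lemma:definemap}.

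Second, $r_i$ transforms correctly under $GL(U)\ltimes\text{Hom}(U,V)$. Each $\mathbb{B}$ and $\mathbb{J}$ transforms by conjugation or right multiplication by the matrices $\begin{pmatrix} g & 0\\ x & 1\end{pmatrix}$, so $r_i$ is right-multiplied by this matrix. Hence $\ker r_i\subset F_i$ is a well-defined complement to $V_i$ on the quotient stack, that is, a splitting. Its image gives a morphism $M_X(p,d)\to[\ldots/GL(U)]$ which is a section.

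The well-definedness is formal. I expect the main subtlety to be showing that this splitting really is the gauge fixing and matches the quasimap picture. Under $\Psi$, $\widetilde\Sigma^{ia}$ should be $R^1p_*(-\otimes\mathscr{O}(-2))$ applied to $\Sigma^{ia}:\mathscr{V}_i\to\mathscr{O}$. By Lemma \ref{mainlemma}, $r_i$ is then the map on $\mathscr{F}_i$ induced by the inclusion $\mathscr{V}_i\hookrightarrow V_i\otimes\mathscr{O}_{\mathbb{P}^1}$ of Lemma \ref{lemma:canonicalsplit}, using $R^1p_*(\mathscr{O}(-2))\simeq\mathscr{O}$. This both confirms $r_i|_{V_i}=1$ conceptually and explains why the choice depends on the basis and on $p$. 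I would verify this identification at the end, though it is not strictly needed for the existence of the section.
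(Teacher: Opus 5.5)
Your proof is correct, and the retraction you build is the same one the paper uses, but you reach it by a different route.

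\textbf{The paper's route.} The paper never writes the splitting in quiver variables. It transports the problem along $\Psi$, where the extension becomes $0\to V_i\otimes\mathscr{O}\to R^1p_*(\mathscr{V}_i(-2))\to R^1p_*(\mathscr{V}_i(-1))\to 0$. It then obtains the retraction by applying $R^1p_*(-\otimes q^*\mathscr{O}_{\mathbb{P}^1}(-2))$ to the universal version of the embedding $\sum_a\tau_{ia}\otimes\Sigma^{ia}:\mathscr{V}_i\to V_i\otimes\mathscr{O}_{\mathbb{P}^1}$ from Lemma~\ref{lemma:canonicalsplit}. The identification at $\infty$ then shows that this retraction restricts to $1$ on $V_i$.

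\textbf{Your route.} You write $r_i=\sum_a\tau_{ia}\otimes\mathbb{J}_{m\alpha}\mathbb{B}_\gamma$ directly on the quiver side. You check $r_i|_{V_i}=1$ from the block lower-triangular shape of $\mathbb{B}_{2,3}$ and $\mathbb{J}$, and equivariance from \eqref{eq:groupact}. Functoriality of $R^1p_*$ shows the two maps agree, as you note at the end.

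\textbf{What your route buys.} It uses neither Theorem~\ref{thm:quasimapcrit}, nor the critical equations, nor stability of the new quiver data. So the section already exists over all of $[\mathscr{R}(\vb{v},\vb{w};d)^{\text{st}}/GL(U)\ltimes\text{Hom}(U,V)]$ and simply restricts to $M_X(p,d)$. It also describes the gauge slice explicitly as the vanishing of the $U_i$-component of $r_i$ on a representative. These conditions are linear in $(\alpha,J_2,J_3)$, with coefficients polynomial in $B_2,B_3$ and the data of $p$. This makes precise the informal remark after the proposition about setting components of $\alpha,J_2,J_3$ to zero.

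\textbf{What the paper's route buys.} It gives the interpretation of the section on the quasimap side. The section is $R^1p_*$ of the embedding $\mathscr{V}_i\hookrightarrow V_i\otimes\mathscr{O}_{\mathbb{P}^1}$, so its dependence on $p$ and on the basis choice is visibly that of Lemma~\ref{lemma:canonicalsplit}.
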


\begin{proof}
Producing such a section is equivalent to producing a splitting of the tautological exact sequences
\begin{equation}
\begin{tikzcd}
    0 \arrow[r] & V_i \otimes \mathscr{O}_{M_X(p, d)} \arrow[r] & F_i \arrow[r] & U_i \arrow[r] & 0
\end{tikzcd}
\end{equation}
over $M_X(p, d)$. Under the isomorphism $\Psi$ of Theorem \ref{thm:quasimapcrit}, this is pulls back to 
\begin{equation}
\begin{tikzcd}
    0 \arrow[r] & V_i \otimes \mathscr{O}_{\mathsf{QM}} \arrow[r] & R^1p_*(\mathscr{V}_i(-2)) \arrow[r] & R^1 p_*(\mathscr{V}_i(-1)) \arrow[r] & 0
\end{tikzcd}
\end{equation}
over $\mathsf{QM}^p_d(X)$. Splittings are the same as morphisms of sheaves $$ R^1 p_*( \mathscr{V}_i(-2)) \to V_i \otimes \mathscr{O}_{\mathsf{QM}}$$ restricting to the identity on $V_i \otimes \mathscr{O}_{\mathsf{QM}} \hookrightarrow R^1p_*(\mathscr{V}_i(-2))$. Noting that we have an isomorphism 
\begin{equation}
    V_i \otimes \mathscr{O}_{\mathsf{QM}} \simeq R^1 p_* (V_i \otimes \mathscr{O}_{\mathsf{QM} \times \mathbb{P}^1}(-2)) 
\end{equation}
we may apply Lemma \ref{mainlemma} to see that splittings are in natural bijection with morphisms $\mathscr{V}_i \to V_i \otimes \mathscr{O}_{\mathsf{QM} \times \mathbb{P}^1}$ extending the given identification $\mathscr{V}_i \eval_{\mathsf{QM} \times \infty} \simeq V_i \otimes \mathscr{O}_{\mathsf{QM}}$. Now observe that the proof of Lemma \ref{lemma:canonicalsplit} works in the universal family and produces such a morphism.
\end{proof}
In practice, this says that one can trade the instruction to quotient by $\text{Hom}(U, V)$ by instead setting some of the components of the $\alpha, J_2, J_3$ variables to zero; which components depends on $p$ and the arbitrary basis choice made in constructing the splitting. Verifying this is a straightforward exercise in using the definition of $\Psi$. Notice that splittings constructed in this fashion do not exist globally over $X$; the obstruction to their existence is the nontriviality of the tautological bundles on $X$. By contrast, the non-reductive quotient descriptions fit easily into a family over $X$.

\subsubsection{Example: partial flags revisited}
To illustrate the above procedure, let us explain how to recover the description of quasimaps to $X = T^*(GL_n/P)$ (with evaluation point chosen in the zero section) via handsaw quiver varieties as in \cite{nakajima2011hansaw} by specializing the construction of this note to the partial flag quivers. The idea will be that we can gauge fix the $\text{Hom}(U, V)$ part of the quotient in $M_X(p, d)$ to eliminate some redundant arrows, and then partially solve the equations in $\text{Crit}(\mathscr{W}_p)$ to eliminate some remaining arrows, and what remains will be exactly the data of \cite{nakajima2011hansaw}. 

We consider $GL_n/P$ to parameterize $\ell$-step partial flags $V_1 \subset V_2 \subset \dots V_\ell \subset \mathbb{C}^n$, with $\dim V_i = n_i$, $n_1 < n_2 < \dots < n_\ell < n$. We have vertex set $Q_0 = \{ 1, \dots, \ell \}$, edge set $Q_1 = \{1, \dots, \ell - 1\}$ and $s(i) = i$, $t(i) = i + 1$. 

We choose our fixed quiver representation for evaluation at $\infty$ as follows: we fix $V_i$ with $\dim V_i = n_i$, set $W_i = 0$ for $i \neq \ell$, and set $W_\ell = \mathbb{C}^n$. We set $\iota_\ell = 0$, $\xi_{3, i} = 0$, and assume that $\jmath_\ell: V_\ell \to W_\ell = \mathbb{C}^n$, $\xi_{2, i} : V_i \to V_{i + 1}$, $i = 1, \dots, \ell - 1$ are injective. We may as well choose bases in the $V_i \simeq \mathbb{C}^{n_i}$ so that each of $\xi_{2, i}$, $\jmath_\ell$ is the inclusion of a coordinate subspace. A basis choice from Lemma \ref{lemma:canonicalsplit} could be e.g. a maximal linearly independent subset of the components of $\{ \jmath_\ell, \jmath_\ell \xi_{2, \ell - 1}, \jmath_\ell \xi_{2, \ell - 1} \xi_{2, \ell - 2}, \dots, \jmath_{\ell} \xi_{2, \ell - 1} \dots \xi_{2, 1} \}.$

Fixing a degree vector $d = (d_1, \dots, d_\ell)$ and following the recipe of Theorem \ref{thm:quasimapcrit}, we introduce quiver data $I_i: V_i \to U_i$, $B_{1, i}: U_i \to  U_i$ ($i = 1, \dots, \ell$), $\alpha_\ell : U_\ell \to \mathbb{C}^n$, $B_{2, i}: U_i \to U_{i + 1}$, $J_{3, i}: U_i \to V_{i + 1}$, $B_{3, i}: U_{i + 1} \to U_i$, $J_{2, i}: U_{i + 1} \to V_i$ ($i = 1, \dots, \ell - 1$). The potential function is 
\begin{equation*}
\begin{split}
    \mathscr{W}_p & = \sum_{i = 1}^\ell \tr B_{1, i} B_{2, i - 1} B_{3, i - 1} - \sum_{i = 1}^{\ell - 1} \tr B_{1, i} B_{3, i} B_{2, i} \\
    & + \sum_{i = 1}^{\ell - 1} \tr J_{2, i}(B_{2, i} I_i - I_{i + 1} \xi_{2, i}) + \sum_{i = 1}^{\ell - 1} \tr J_{3, i}(B_{3, i} I_{i + 1}).
\end{split}
\end{equation*}
The critical point conditions are 
\begin{equation} \label{eq: flagcrit}
\begin{split}
   \pdv{\mathscr{W}_p}{B_{2, i}} = B_{3, i} B_{1, i + 1} - B_{1, i} B_{3, i} + I_i J_{2, i} & = 0 \\
   \pdv{\mathscr{W}_p}{J_{3, i}} = B_{3, i} I_{i + 1} & = 0 \\ 
   \pdv{\mathscr{W}_p}{B_{3, i}} = B_{1, i + 1} B_{2, i} - B_{2, i} B_{1, i} + I_{i + 1} J_{3, i} & = 0 \\ 
   \pdv{\mathscr{W}_p}{J_{2, i}} = B_{2, i} I_i - I_{i + 1} \xi_{2, i} & = 0 \\
   \pdv{\mathscr{W}_p}{B_{1, i}} = B_{2, i - 1} B_{3, i - 1} - B_{3, i} B_{2, i} & = 0 \\
   \pdv{\mathscr{W}_p}{I_i} = J_{2, i} B_{2, i} - \xi_{2, i - 1} J_{2, i - 1} + J_{3, i - 1} B_{3, i - 1} & = 0. 
\end{split}
\end{equation}
Notice that on the critical locus, because of the first four lines above, the stability condition $\mathbb{C}[B_1]I(V) = U$ we use in this paper is equivalent to imposing the stability condition $\mathbb{C} \langle B_1, B_2, B_3 \rangle I(V) = U$ (this is true in general). The second formulation will be more convenient to make contact with the handsaw. 

The action of $(x_1, \dots, x_\ell) \in \prod_{i = 1}^\ell \text{Hom}(U_i, V_i)$ on the quiver data $\alpha_\ell$, $J_{3, i}$ is from \eqref{eq:groupact}:
\begin{equation*}
\begin{split}
    \alpha_\ell & \mapsto \alpha_\ell + \jmath_\ell x_\ell \\ 
    J_{3, i} & \mapsto J_{3, i} + \xi_{2, i} x_i - x_{i + 1} B_{2, i}. 
\end{split}
\end{equation*}
Now we may use specific features of the flag variety to eliminate the quotient by $\prod_i \text{Hom}(U_i, V_i)$. Fixing a decomposition $$ \mathbb{C}^n \simeq \text{im} \, \jmath_\ell \oplus \mathbb{C}^{n - n_\ell} \simeq \mathbb{C}^{n_\ell} \oplus \mathbb{C}^{n - n_\ell}$$
we may write the components of $\alpha_\ell$ with respect to this decomposition as $\alpha_\ell = \begin{pmatrix} \alpha'_\ell && b_\ell\end{pmatrix}^T$, in terms of which the action is (because $\jmath_\ell$ is injective) $\alpha'_\ell \mapsto \alpha'_\ell + x_\ell$ and $b_\ell$ is unchanged. Clearly we may replace the quotient by $\text{Hom}(U_\ell, V_\ell)$ by the condition $\alpha'_\ell = 0$, and work with a new quiver involving only $b_\ell$. 

The remaining unipotent action on this new quiver is by the subgroup of the form $$(x_1, x_2, \dots, x_{\ell - 1}, 0) \in \prod_{i = 1}^{\ell - 1} \text{Hom}(U_i, V_i) \subset \prod_{i = 1}^\ell \text{Hom}(U_i, V_i).$$
and we have e.g. $J_{3, \ell - 1} \mapsto J_{3, \ell - 1} + \xi_{2, \ell - 1} x_{\ell - 1}$ under this subgroup. Similarly decomposing 
$$V_\ell  \simeq \text{im} \, \xi_{2, \ell - 1} \oplus \mathbb{C}^{n_\ell - n_{\ell - 1}} \simeq \mathbb{C}^{n_{\ell - 1}} \oplus \mathbb{C}^{n_\ell - n_{\ell - 1}}$$
we may write relative to this decomposition $J_{3, \ell - 1} = \begin{pmatrix} J'_{3, \ell - 1} && b_{\ell - 1} \end{pmatrix}^T$ and gauge fix by imposing $J'_{3, \ell - 1} = 0$. By induction, we may assume we have fixed decompositions 
$$V_i \simeq \text{im} \, \xi_{2, i - 1} \oplus \mathbb{C}^{n_i - n_{i - 1}} \simeq \mathbb{C}^{n_{ i- 1}} \oplus \mathbb{C}^{n_i - n_{i - 1}}$$
with respect to which $J_{3, i - 1} = \begin{pmatrix} 0 && b_{i - 1} \end{pmatrix}^T$. The remaining unipotent action is now trivial. Then we may replace the original quiver by a new quiver involving arrows $b_i: U_i \to \mathbb{C}^{n_{i + 1} - n_i}$ for $i = 1, \dots, \ell$ in place of $J_{3, i}$, $\alpha_\ell$ involving a quotient by $\prod_i GL(U_i)$ only. 

Such a quiver still has too many arrows to be the one from \cite{nakajima2011hansaw}, but at this point we may start using the equations \eqref{eq: flagcrit}. The $i = \ell$ component of the final line of \eqref{eq: flagcrit} reads 
$$ J_{3, \ell - 1} B_{3, \ell - 1} = \xi_{2, \ell - 1} J_{2, \ell - 1}$$
but by hypothesis, $J_{3, \ell - 1}$ factors through a complementary direct summand to the image of $\xi_{2, \ell - 1}$. Therefore $\xi_{2, \ell - 1} J_{2, \ell - 1} = 0$, which by injectivity of $\xi_2$ implies $J_{2, \ell - 1} = 0$. Continuing by induction, one infers that $\pdv{\mathscr{W}_p}{I_i} = 0$ implies $J_{2, i - 1} = 0$. In principle, we still have to deal with the remaining constraints $J_{3, i - 1} B_{3, i - 1} = 0$ on $J_3$, but these will turn out to be vacuous as we see presently. 

Using that $J_{2, i}$ must vanish for all $i = 1, \dots, \ell -1$, the first, second, and fifth lines of \eqref{eq: flagcrit} read 
\begin{equation*}
\begin{split}
    B_{3, i} B_{1, i + 1} - B_{1, i} B_{3, i} & = 0 \\ 
    B_{3, i} I_i & = 0 \\ 
    B_{2, i - 1} B_{3, i - 1} - B_{3, i} B_{2, i} & = 0
\end{split}
\end{equation*}
which together with the stability condition $\mathbb{C}\langle B_1, B_2, B_3 \rangle I(V) = U$ imply $B_{3, i} =0$ for all $i = 1, \dots, \ell - 1$. At this point, all but the third and fourth lines of \eqref{eq: flagcrit} assert $0 = 0$, so it remains only to deal with the third and fourth lines. The equation $B_{2, i} I_i = I_{i + 1} \xi_{2, i}$ says that, if we write the components of $I_i$ relative to the decompositions $V_i \simeq \text{im} \, \xi_{2, i - 1} \oplus \mathbb{C}^{n_i - n_{i - 1}}$ as 
\begin{equation*}
    I_i = \begin{pmatrix} \star && a_i \end{pmatrix}
\end{equation*}
then $\star$ is determined via $a_i$ and $B_{2, i}$ for lower values of $i$, so it is redundant information in the quiver and we may dispense with $I_i$ in favor of arrows $a_i: \mathbb{C}^{n_i - n_{i - 1}} \to U_i$. Concretely, relative to the full inductive splitting 
$$ V_i \simeq \mathbb{C}^{n_1} \oplus  \dots \oplus \mathbb{C}^{n_{i - 1} - n_{i - 2}} \oplus  \mathbb{C}^{n_i - n_{i - 1}}$$
we have 
$$I_i = \begin{pmatrix} B_{2, i - 1} \dots B_{2, 1} a_1 && \dots && B_{2, i - 1} a_{i - 1} && a_i \end{pmatrix}.$$
Notice in particular that 
$$ I_{i + 1} J_{3, i} = \begin{pmatrix} \star && a_{i +1} \end{pmatrix} \begin{pmatrix} 0 \\ b_i \end{pmatrix} = a_{i + 1} b_i.$$
We can summarize the above as the following: by gauge fixing the quotient by $\text{Hom}(U, V)$, we have been able to eliminate some components of $\alpha_\ell, J_{3, i}$ and use the equations \eqref{eq: flagcrit} to eliminate some other arrows. The ``surviving'' components of each arrow in the quiver obtained by specializing Theorem \ref{thm:quasimapcrit} are indicated as 
\begin{equation}
\begin{split}
    \alpha_\ell: U_\ell \to W_\ell \simeq \mathbb{C}^n  & \leadsto b_\ell: U_\ell \to \mathbb{C}^{n - n_\ell} \\ 
    J_{3, i}: U_i \to V_{i + 1} & \leadsto b_i: U_i \to \mathbb{C}^{n_{i + 1} - n_i} \\ 
    J_{2, i}: U_{i + 1} \to V_i & \leadsto 0 \\ 
    B_{3, i}: U_{i + 1} \to U_i & \leadsto 0 \\
    I_i: V_i \to U_i & \leadsto a_i: \mathbb{C}^{n_i - n_{i - 1}} \to U_i. 
\end{split}
\end{equation}
We keep $B_{1, i} : U_i \to U_i$ and $B_{2, i}: U_i \to U_{i + 1}$ as is. These arrows must satisfy the stability condition\footnote{It may appear at first sight confusing that the varieties entering Theorem \ref{thm:quasimapcrit} are defined with a stability condition involving $B_1$ only, but are equivalent to varieties defined with a stability condition involving $B_1, B_2$. The point is that the stability condition of Theorem \ref{thm:quasimapcrit} involves \textit{all components of $I_i$}, whereas the handsaw stability condition involves only $a_i$. By the discussion above, the missing components of $I_i$ are exactly what is obtained by acting with $B_2$ on $a_i$, so using the handsaw relation $\comm{B_1}{B_2} + ab = 0$ one can see the two formulations of the stability condition are equivalent.} $\mathbb{C}\langle B_1, B_2 \rangle a( \oplus_i \mathbb{C}^{n_i - n_{i - 1}}) = U$. Then the only nontrivial critical point condition from \eqref{eq: flagcrit} is the third line, which becomes 
\begin{equation}
    B_{1, i} B_{2, i - 1} - B_{2, i - 1} B_{1, i - 1} + a_i b_{i - 1} = 0.
\end{equation}
The arrows $(B_1, B_2, a, b)$ satisfying the above equation, stability condition, and taken modulo $\prod_i GL(U_i)$ precisely define Nakajima's handsaw variety \cite{nakajima2011hansaw} (unfortunately, what Nakajima calls $B_1$ we have called $B_2$ and vice versa). The argument above shows that the critical locus entering Theorem \ref{thm:quasimapcrit}, for $X = T^*(GL_n/P)$ with evaluation point chosen in the zero section, may be identified with a handsaw quiver variety. 

Proposition \ref{prop:gaugefixunipotent} guarantees that the first part of the procedure above may always be executed in general, though in general there is no reason to expect the rest of the critical point equations for $\mathscr{W}_p$ to simplify. 
\newpage

\printbibliography

@misc{qmtogit,
      title={Stable quasimaps to GIT quotients}, 
      author={I. Ciocan-Fontanine and B. Kim and D. Maulik},
      year={2011},
      eprint={1106.3724},
      archivePrefix={arXiv},
      primaryClass={math.AG},
      url={https://arxiv.org/abs/1106.3724}, 
}

@article{adhm,
    author = {M. Atiyah and V. Drinfeld and N. Hitchin and Y. Manin},
    title = {Construction of instantons},
    journal = {Physics Letters A},
    year = {1978}, 
    volume = {65}, 
    issue = {3}, 
    pages = {185-187},
}

@misc{okounkovpcmi,
      title={Lectures on K-theoretic computations in enumerative geometry}, 
      author={A. Okounkov},
      year={2017},
      eprint={1512.07363},
      archivePrefix={arXiv},
      primaryClass={math.AG},
      url={https://arxiv.org/abs/1512.07363}, 
}

@article{Pandharipande_2009,
   title={Curve counting via stable pairs in the derived category},
   volume={178},
   ISSN={1432-1297},
   url={http://dx.doi.org/10.1007/s00222-009-0203-9},
   DOI={10.1007/s00222-009-0203-9},
   number={2},
   journal={Inventiones mathematicae},
   publisher={Springer Science and Business Media LLC},
   author={Pandharipande, R. and Thomas, R. P.},
   year={2009},
   month=may, 
    pages={407–447} 
}

@misc{BT,
    author = {T. Botta and S. Tamagni}, 
    title = {Quasimap critical cohomology, Coulomb branches, and quantum groups}, 
    year = {2026},
}

@misc{ptvv,
      title={Shifted Symplectic Structures}, 
      author={T. Pantev and B. Toen and M. Vaquie and G. Vezzosi},
      year={2013},
      eprint={1111.3209},
      archivePrefix={arXiv},
      primaryClass={math.AG},
      url={https://arxiv.org/abs/1111.3209}, 
}

@misc{nakajima2011hansaw,
      title={Handsaw quiver varieties and finite W-algebras}, 
      author={H. Nakajima},
      year={2011},
      eprint={1107.5073},
      archivePrefix={arXiv},
      primaryClass={math.QA},
      url={https://arxiv.org/abs/1107.5073}, 
}

@article{nakajimaALE,
    author = {H. Nakajima},
    title = {Instantons on ALE spaces, quiver varieties, and Kac-Moody algebras},
    journal = {Duke Math J.},
    year = {1994},
    volume={76},
    issue={2},
    pages={365-416}
}

@article{nakajimaKM, 
author={H. Nakajima}, 
title={Quiver varieties and Kac-Moody algebras}, 
journal={Duke Math J.}, 
year={1998},
volume={91}, 
issue={3},
pages={515-560}

}

@url{vakil,
author={R. Vakil}, 
title={The Rising Sea: Foundations of Algebraic Geometry}, 
url={https://math.stanford.edu/~vakil/216blog/FOAGjul3123public.pdf}, 
year={2023}, 
}

\end{document}